\documentclass[a4paper]{amsart}
\usepackage{graphicx}
\usepackage{amssymb}
\usepackage{amsmath}
\usepackage{amsthm}
\usepackage{amscd}
\usepackage[all,2cell]{xy}

\UseAllTwocells \SilentMatrices
\newtheorem{thm}{Theorem}[section]

\newtheorem{cor}[thm]{Corollary}
\newtheorem{lem}[thm]{Lemma}
\newtheorem{exm}[thm]{Example}

\newtheorem{prop}[thm]{Proposition}
\theoremstyle{definition}

\theoremstyle{remark}
\newtheorem{rem}[thm]{\bf Remark}
\numberwithin{equation}{section}

\usepackage[usenames,dvipsnames,svgnames,table]{xcolor}

\begin{document}
\title[Derived equivalences via HRS-tilting]
{Derived equivalences via HRS-tilting}
\author[Xiao-Wu Chen, Zhe Han, Yu Zhou]{Xiao-Wu Chen, Zhe Han$^*$, Yu Zhou}

\subjclass[2010]{18E40, 18E30, 18E10}%
\date{\today}

\thanks{E-mail: xwchen@mail.ustc.edu.cn, hanzhe0302@163.com, yuzhoumath@gmail.com}
\keywords{derived equivalence, torsion pair, realization functor, HRS-tilting}%
\thanks{$^*$ the correspondence author}

\dedicatory{}%
\commby{}%

\begin{abstract}
Let $\mathcal{A}$ be an abelian category and $\mathcal{B}$ be the Happel-Reiten-Smal{\o} tilt of $\mathcal{A}$ with respect to a torsion pair. We give necessary and sufficient conditions for the existence of a derived equivalence between $\mathcal{B}$ and $\mathcal{A}$,  which is compatible with the inclusion of $\mathcal{B}$ into the derived category of $\mathcal{A}$. In particular, any splitting torsion pair induces a derived equivalence. We prove that for the realization functor of any bounded $t$-structure, its denseness implies its fully-faithfulness.
\end{abstract}

\maketitle

\section{Introduction}

Let $(\mathcal{T}, \mathcal{F})$ be a torsion pair in an abelian category $\mathcal{A}$. Let $\mathcal{B}$ be the corresponding HRS-tilt \cite{HRS}, which is a certain full subcategory  of the bounded derived category $\mathbf{D}^b(\mathcal{A})$. Moreover, it is the heart of a certain bounded $t$-structure on $\mathbf{D}^b(\mathcal{A})$, in particular, the category $\mathcal{B}$ is abelian. We denote by $G\colon \mathbf{D}^b(\mathcal{B})\rightarrow \mathbf{D}^b(\mathcal{A})$  the corresponding realization functor \cite{BBD, Bei}, that is, a triangle functor whose restriction on $\mathcal{B}$ coincides with the inclusion.

In general, this realization functor $G$ is not an equivalence. However, it is proved in \cite{HRS} that if the torsion pair $(\mathcal{T}, \mathcal{F})$ is tilting or cotilting, then $G$ is an equivalence; also see \cite{BV, Noo, Chen}.  We mention that  even if $(\mathcal{T}, \mathcal{F})$ is neither tilting nor cotilting, the realization functor  $G$ might still be an equivalence. Indeed, given a two-term tilting complex of modules, the corresponding torsion pair in the module category is in general neither tilting nor cotilting; compare \cite{HKM, BZ1}. The realization functor in this case is an equivalence, which might be taken as the derived equivalence induced by the tilting complex.

We mention that the HRS-tilting is very important in the representation theory of quasi-tilted algebras \cite{HRS} and in the derived equivalences between K3 surfaces \cite{Hu}. Moreover, it plays a central role in the study of Bridgeland's stability conditions \cite{Bri, Woo, Qiu, QW}.  Therefore, it is of great interest to know when the realization functor $G$ in the HRS-tilting is an equivalence, and thus yields a derived equivalence. The main result of this paper answers this question in full generality.

\vskip 10pt

\noindent {\bf Theorem A.}\quad \emph{Let $\mathcal{A}$ be an abelian category with a torsion pair $(\mathcal{T}, \mathcal{F})$. Denote by $\mathcal{B}$ the corresponding  HRS-tilt and let $G\colon \mathbf{D}^b(\mathcal{B})\rightarrow \mathbf{D}^b(\mathcal{A})$ be a realization functor. Then the following statements are equivalent:
\begin{enumerate}
\item The functor $G \colon \mathbf{D}^b(\mathcal{B})\rightarrow \mathbf{D}^b(\mathcal{A})$ is an equivalence;
\item The category $\mathcal{A}$  lies in the essential image of $G$;
        \item Each object $A\in \mathcal{A}$ fits into an exact sequence
        $$0\longrightarrow F^0\longrightarrow F^1\longrightarrow A\longrightarrow T^0 \longrightarrow T^1\longrightarrow 0$$
    with $F^i\in \mathcal{F}$ and $T^i\in \mathcal{T}$,  such that the corresponding class in the third Yoneda extension group ${\rm Yext}_\mathcal{A}^3(T^1, F^0)$ vanishes.
\end{enumerate}}

  \vskip 5pt

We point out two features of Theorem A:  (i) We unify the tilting and cotilting cases due to \cite{HRS} in a symmetric manner. Indeed, in the tilting case where $\mathcal{T}$ cogenerates $\mathcal{A}$, we  take $F^0=0=F^1$ in the exact sequence in (3). In the cotilting case, we take $T^0=0=T^1$. (ii) The characterization in (3) is intrinsic, since the HRS-tilt $\mathcal{B}$ and the functor $G$ are not explicitly involved. We emphasize that the Yext-vanishing condition in (3) is necessary; see Example~\ref{exm:vanish}.

We observe that Theorem A applies to splitting torsion pairs. Recall that a torsion pair $(\mathcal{T}, \mathcal{F})$ is splitting if ${\rm Ext}_\mathcal{A}^1(F, T)=0$ for any $T\in \mathcal{T}$ and $F\in \mathcal{F}$. In this situation, any object $A$ is isomorphic to $T\oplus F$ for some objects $T\in \mathcal{T}$ and $F\in \mathcal{F}$. Then for the exact sequence in (3), we take $F^0=0=T^1$ such that the middle short exact sequence splits.

We mention the related work \cite{SR, PV}, which studies when the realization functor for a general bounded $t$-structure is an equivalence. These work is related to Serre duality and tilting complexes, respectively.

By Theorem A(2), the denseness of $G$ implies its fully-faithfulness. Indeed, there is a general result for the realization functor of any bounded $t$-structure.

\vskip 10pt

\noindent {\bf Theorem B.}\quad \emph{Let $\mathcal{D}$ be a triangulated category with a bounded $t$-structure and its heart $\mathcal{A}$, and $G \colon \mathbf{D}^b(\mathcal{A})\rightarrow \mathcal{D}$ be its realization functor. Assume that $G$ is dense. Then $G$ is an equivalence. }

\vskip 5pt
We mention that the proof of Theorem B is somewhat routine. However, in view of \cite{COS}, the  assertion seems to be quite surprising.

The paper is structured as follows. In Section 2, we recall basic facts on $t$-structures, realization functors and torsion pairs. We study the canonical maps from the Yoneda extension groups in the heart to the Hom groups in the triangulated category. We prove that these canonical maps are compatible with $t$-exact functors; see Proposition \ref{prop:t-ex}. Then we prove Theorem B (= Theorem \ref{thm:2}). In Section 3, we divide the proof of Theorem A into three propositions. The key observation is Proposition \ref{prop:ff1}, where we show that the restriction of the realization functor to the backward HRS-tilt is fully faithful. We give various examples to obtain new derived equivalences in Section 4, which are related to TTF-triples and two-term silting subcategories.

\section{Preliminaries}

In this section, we recall basic facts on $t$-structures, realization functors and torsion pairs. We make preparation for the next section. For the realization functor of a bounded $t$-structure, we prove that its denseness implies its full-faithfulness.

\subsection{Canonical maps}\label{sec:canonial}

Let $\mathcal{A}$ be an abelian category. For two objects $X, Y\in \mathcal{A}$ and $n\geq 1$, ${\rm Yext}_\mathcal{A}^n(X, Y)$ denotes the $n$-th Yoneda extension group of $X$ by $Y$, whose elements are equivalent classes $[\xi]$ of exact sequences
$$\xi\colon 0\rightarrow Y\rightarrow E^{-n+1}\rightarrow \cdots \rightarrow E^{-1} \rightarrow E^0\rightarrow X\rightarrow 0.$$ For $[\xi]\in {\rm Yext}_\mathcal{A}^n(X, Y)$ and $[\gamma]\in {\rm Yext}^m_\mathcal{A}(Y, Z)$, the Yoneda product $[\gamma\cup \xi]\in {\rm Yext}_\mathcal{A}^{n+m}(X, Z)$ is obtained by splicing $\xi$ and $\gamma$.

Let $\mathcal{D}$ be a triangulated category, whose translation functor is denoted by $\Sigma$. We denote by $\Sigma^{-1}$ a quasi-inverse of $\Sigma$. Then the powers $\Sigma^n$ are defined for all integers $n$. For two full subcategories $\mathcal{X}, \mathcal{Y}$ of  $\mathcal{D}$, we denote by
$$\mathcal{X}\ast \mathcal{Y}=\{Z\in \mathcal{D}\; |\;\exists\; \mbox{exact triangle } X\rightarrow Z \rightarrow Y\rightarrow \Sigma(X) \mbox{ with } X\in \mathcal{X}, Y\in \mathcal{Y}\}.$$
The operation $\ast$ is associative by \cite[Lemme 1.3.10]{BBD}.

 Recall that  a \emph{$t$-structure} $(\mathcal{D}^{\leq 0}, \mathcal{D}^{\geq 0})$ on $\mathcal{D}$ consists of two full subcategories $\mathcal{D}^{\leq 0}$ and  $\mathcal{D}^{\geq 0}$ subject to the following conditions:
\begin{enumerate}
	\item $\Sigma\mathcal{D}^{\leq 0}\subset \mathcal{D}^{\leq 0}$ and $\mathcal{D}^{\geq 0}\subset \Sigma\mathcal{D}^{\geq 0}$;
	\item ${\rm Hom}_\mathcal{D}(\mathcal{D}^{\leq 0}, \Sigma^{-1}\mathcal{D}^{\geq 0})=0$, that is, ${\rm Hom}_\mathcal{D}(X,Y)=0$ for any $X\in \mathcal{D}^{\leq 0}$ and $Y\in \Sigma^{-1}\mathcal{D}^{\geq 0}$;
	\item For any object $Z$ in $\mathcal{D}$, there exists an exact triangle
	\[X\longrightarrow  Z\longrightarrow  Y\longrightarrow  \Sigma(X)\]
	with $X\in \mathcal{D}^{\leq 0}$ and $Y\in \Sigma^{-1}\mathcal{D}^{\geq 0}$.
\end{enumerate}
The \emph{heart} of a $t$-structure $(\mathcal{D}^{\leq 0}, \mathcal{D}^{\geq 0})$ is the full subcategory $\mathcal{A}=\mathcal{D}^{\leq 0}\cap \mathcal{D}^{\geq 0}$, which is an abelian category. We shall assume that the $t$-structure $(\mathcal{D}^{\leq0},\mathcal{D}^{\geq 0})$ is \emph{bounded}, which means that
\[\mathcal{D}=\bigcup_{i,j\in\mathbb{Z}} (\Sigma^i \mathcal{D}^{\leq0}\cap\Sigma^j\mathcal{D}^{\geq 0}).\]
 We observe that a bounded $t$-structure is determined by its heart. Indeed, we have
  $$\mathcal{D}^{\leq 0}=\bigcup_{n\geq 0} \Sigma^{n}(\mathcal{A})\ast \cdots \ast \Sigma(A)\ast \mathcal{A}, \quad \mbox{and  } \mathcal{D}^{\geq 0}=\bigcup_{n\geq 0} \mathcal{A}\ast \Sigma^{-1}(\mathcal{A})\ast \cdots \ast \Sigma^{-n}(\mathcal{A}).$$
  Denote by $H_\mathcal{A}^0\colon \mathcal{D}\rightarrow \mathcal{A}$ the corresponding cohomological functor. Set $H_\mathcal{A}^n=H_\mathcal{A}^0\Sigma^n$ for $n\in \mathbb{Z}$. For details, we refer to \cite[1.3]{BBD}.

In what follows, we assume that $\mathcal{D}$ has a bounded $t$-structure $(\mathcal{D}^{\leq 0}, \mathcal{D}^{\geq 0})$ with its heart $\mathcal{A}$. For any objects $X, Y\in \mathcal{A}$ and $n\geq 1$, we shall recall the construction of  the \emph{canonical maps}
\begin{align}
\theta^n=\theta_{X, Y}^n\colon {\rm Yext}_\mathcal{A}^n(X, Y)\longrightarrow {\rm Hom}_\mathcal{D}(X, \Sigma^n(Y)), \quad [\xi]\mapsto \theta^n(\xi).
\end{align}
For the case $n=1$, we take an exact sequence $\xi \colon 0\rightarrow Y\stackrel{f}\rightarrow E\stackrel{g}\rightarrow X\rightarrow 0$ in $\mathcal{A}$. It fits uniquely into an exact triangle $$Y\stackrel{f}\longrightarrow E\stackrel{g}\longrightarrow X \xrightarrow{\theta^1(\xi)} \Sigma(Y).$$
Moreover, the morphism $\theta^1(\xi)$ depends on the equivalence class $[\xi]$.

  For the general case, we assume that $\xi \in {\rm Yext}^{n+1}_\mathcal{A}(X, Y)$. Write $[\xi]=[\xi_1\cup \xi_2]$ with $[\xi_1]\in {\rm Yext}_\mathcal{A}^{1}(Z, Y)$ and $[\xi_2]\in {\rm Yext}_\mathcal{A}^{n}(X, Z)$. We define $\theta^{n+1}(\xi)=\Sigma^n(\theta^1(\xi_1))\circ \theta^n(\xi_2)$. We observe that the morphism  $\theta^{n+1}(\xi)$ does not depend on the choice of $\xi_1$ and $\xi_2$.

The following results are well known; compare \cite[Remarque 3.1.17]{BBD}.

\begin{lem}\label{lem:can}
Keep the notation as above. Then the following statements hold.
\begin{enumerate}
\item The map $\theta^1$ is an isomorphism, and $\theta^2$ is injective.
\item Assume that $\theta^n_{A, B}$ are isomorphisms for all objects $A, B$ in $\mathcal{A}$. Then $\theta^{n+1}$ is injective.
    \item For $n\geq 2$, a morphism $f\colon X\rightarrow \Sigma^n(Y)$ lies in the image of $\theta_{X, Y}^n$ if and only if $f$ admits a factorization $X\rightarrow \Sigma(X_1) \rightarrow \cdots \rightarrow \Sigma^{n-1}(X_{n-1})\rightarrow \Sigma^n(Y)$ with each $X_i\in \mathcal{A}$.
\end{enumerate}
\end{lem}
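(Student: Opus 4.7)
The plan is to handle the three claims in order. Part (1) supplies the base case by a direct construction; part (2) bootstraps injectivity at level $n+1$ from isomorphism at level $n$, and in particular implies the injectivity of $\theta^2$ at $n = 1$; part (3) is then read off from the recursive definition of $\theta^n$.

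For (1), I would construct an explicit inverse to $\theta^1$: given $f\colon X \to \Sigma(Y)$ in $\mathcal{D}$, complete it to an exact triangle $Y \to E \to X \xrightarrow{f} \Sigma(Y)$. Because both halves of the $t$-structure are closed under extensions and $X, Y \in \mathcal{A}$, the middle term $E$ lies in $\mathcal{A}$, and the long cohomology sequence of $H^*_\mathcal{A}$ collapses to a short exact sequence $0 \to Y \to E \to X \to 0$ in $\mathcal{A}$; its class maps to $f$ under $\theta^1$ by construction. For injectivity, $\theta^1([\xi]) = 0$ makes the associated triangle split in $\mathcal{D}$, and the splitting transfers back to $\mathcal{A}$ because $\mathcal{A} \hookrightarrow \mathcal{D}$ is fully faithful, so $[\xi] = 0$.

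For (2), take $\xi = \xi_1 \cup \xi_2 \in {\rm Yext}_\mathcal{A}^{n+1}(X, Y)$ with $\xi_1\colon 0 \to Y \to E \xrightarrow{g} Z \to 0$ and $\theta^{n+1}(\xi) = \Sigma^n\theta^1(\xi_1) \circ \theta^n(\xi_2) = 0$. Applying ${\rm Hom}_\mathcal{D}(X, -)$ to $\Sigma^n$ of the triangle $Y \to E \xrightarrow{g} Z \xrightarrow{\theta^1(\xi_1)} \Sigma(Y)$ and reading the long exact sequence, one sees $\theta^n(\xi_2) = \Sigma^n(g) \circ h$ for some $h\colon X \to \Sigma^n(E)$. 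The hypothesis gives $h = \theta^n(\eta)$ for a unique $\eta \in {\rm Yext}_\mathcal{A}^n(X, E)$, and naturality of $\theta^n$ in the second variable forces $\xi_2 = g_*\eta$. To conclude $[\xi] = 0$, I would invoke the classical Yoneda interchange identity $\xi_1 \cup g_*\eta = g^*\xi_1 \cup \eta$, noting that $g^*\xi_1 \in {\rm Yext}_\mathcal{A}^1(E, Y)$ is split because $g$ is the middle map of $\xi_1$ (concretely, $E \times_Z E \cong E \oplus Y$). This interchange identity is the main obstacle: the remaining steps are routine triangulated-category bookkeeping, but verifying the identity cleanly requires a short diagram chase with pushouts and pullbacks.

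For (3), the "only if" direction splits $\xi\colon 0 \to Y \to E^{-n+1} \to \cdots \to E^0 \to X \to 0$ at its cocycle objects $Z_k$ (with $Z_0 = X$, $Z_n = Y$) into short exact sequences $\xi_k\colon 0 \to Z_k \to E^{-k+1} \to Z_{k-1} \to 0$; each $\theta^1(\xi_k)\colon Z_{k-1} \to \Sigma(Z_k)$ is a morphism in $\mathcal{D}$, and the recursive definition yields $\theta^n(\xi) = \Sigma^{n-1}\theta^1(\xi_n) \circ \cdots \circ \Sigma\theta^1(\xi_2) \circ \theta^1(\xi_1)$, a factorization of the required form. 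Conversely, given a factorization through objects $\Sigma^k(X_k)$ with $X_k \in \mathcal{A}$, each component morphism desuspends to some $X_{k-1} \to \Sigma(X_k)$, which by (1) equals $\theta^1$ of a unique short exact sequence $\xi_k$ in $\mathcal{A}$; splicing these yields an $n$-fold extension whose image under $\theta^n$ is the given morphism.
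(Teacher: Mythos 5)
Your proposal is correct and takes essentially the same route as the paper: (1) is the standard heart-extension argument (the paper cites it as well known) with the injectivity of $\theta^2$ obtained as the case $n=1$ of (2); your proof of (2) is the same factorization/diagram chase, the only difference being that where the paper invokes exactness of the Yoneda-extension sequence in a commutative diagram with exact rows, you verify the needed vanishing $[\xi_1\cup g_*\eta]=[(g^*\xi_1)\cup\eta]=0$ directly via the interchange identity and the splitness of $g^*\xi_1$; and (3) is deduced from the surjectivity of $\theta^1$ exactly as the paper indicates. I see no gaps.
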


\begin{proof}
In (1), the first statement is well known, and the second one is a special case of (2).  The statement in (3) follows from the surjectivity of $\theta^1$.

For (2), we assume that $\theta^{n+1}(\xi)=0$ for $[\xi]\in {\rm Yext}^{n+1}_\mathcal{A}(X, Y)$. Take an exact sequence $\xi_1\colon 0\rightarrow Y\rightarrow E\stackrel{g}\rightarrow Z \rightarrow 0$ such that $[\xi]=[\xi_1\cup \xi_2]$ for some element $[\xi_2]\in {\rm Yext}_\mathcal{A}^n(X, Z)$. Then we have the following commutative diagram with exact rows.
\[\xymatrix@C=.75cm{
{\rm Yext}_\mathcal{A}^n(X, E) \ar[rr]^-{{\rm Yext}_\mathcal{A}^n(X, g)} \ar[d]^-{\theta^n_{X, E}}  && {\rm Yext}_\mathcal{A}^n(X, Z) \ar[rr]^-{[\xi_1\cup-]} \ar[d]^-{\theta^n_{X, Z}} && {\rm Yext}_\mathcal{A}^{n+1}(X, Y) \ar[d]^-{\theta^{n+1}_{X, Y}}\\
{\rm Hom}_\mathcal{D}(X, \Sigma^n(E)) \ar[rr]^-{{\rm Hom}_\mathcal{D}(X, \Sigma^n(g))} && {\rm Hom}_\mathcal{D}(X, \Sigma^n(Z)) \ar[rr]^{{\rm Hom}_\mathcal{D}(X, \Sigma^n\theta^1(\xi_1))} && {\rm Hom}_\mathcal{D}(X, \Sigma^{n+1}(Y))
}\]
Then using the facts that $\theta^n_{X, E}$ and $\theta^n_{X, Z}$ are isomorphisms,  we infer $[\xi]=0$ by a diagram chasing.
\end{proof}

\begin{exm}\label{exm:1}
{\rm Let $\mathcal{A}$ be an abelian category. Denote by $\mathbf{D}^b(\mathcal{A})$ its bounded derived category. An object $X$ in $\mathcal{A}$ corresponds to a stalk complex concentrated on degree zero, which is still denoted by $X$. This allows us to view $\mathcal{A}$ as a full subcategory of $\mathbf{D}^b(\mathcal{A})$. For $X, Y\in \mathcal{A}$ and $n\in \mathbb{Z}$, we set ${\rm Ext}^n_\mathcal{A}(X, Y)={\rm Hom}_{\mathbf{D}^b(\mathcal{A})}(X, \Sigma^n(Y))$. We observe that ${\rm Ext}_\mathcal{A}^n(X, Y)=0$ for $n<0$.

For a complex $X$, $H^n(X)$ denotes the $n$-th cohomology of $X$. Then $\mathbf{D}^b(\mathcal{A})$ has a canonical $t$-structure given by $\mathbf{D}^b(\mathcal{A})^{\leq 0}=\{X\in \mathbf{D}^b(\mathcal{A})\; |\; H^n(X)=0 \mbox{  for all } n>0\}$ and $\mathbf{D}^b(\mathcal{A})^{\geq 0}=\{X\in \mathbf{D}^b(\mathcal{A})\; |\; H^n(X)=0 \mbox{  for all } n<0\}$. Then $\mathcal{A}$ is identified with the heart of this $t$-structure, and the corresponding cohomological functor $H_\mathcal{A}^0$ coincides with $H^0$. In this case, the canonical maps are denoted by \begin{align}\label{iso:chi}
\chi^n=\chi^n_{X, Y}\colon {\rm Yext}^n_\mathcal{A}(X, Y)\longrightarrow {\rm Ext}^n_\mathcal{A}(X, Y), \quad [\xi]\mapsto \chi^n(\xi).
\end{align}
They are isomorphisms for all $n\geq 1$; see \cite[Propositions XI.4.7 and 4.8]{Ive}. The morphism $\chi^n(\xi)\colon X\rightarrow \Sigma^n(Y)$ is known as the \emph{characteristic class} of $\xi$.}
\end{exm}

\subsection{$t$-exact functors}

Let $\mathcal{D}$ and $\mathcal{D}'$ be triangulated categories, whose translation functors are $\Sigma$ and $\Sigma'$, respectively. Recall that a triangle functor $(F, \omega)\colon \mathcal{D}\rightarrow \mathcal{D}'$ consists of an additive functor $F$ and a natural isomorphism $\omega\colon F\Sigma\rightarrow \Sigma' F$ such that any exact triangle $X\rightarrow Y\rightarrow Z\stackrel{h}\rightarrow \Sigma(X)$ in $\mathcal{D}$ is sent to an exact triangle $F(X)\rightarrow F(Y)\rightarrow F(Z) \xrightarrow{\omega_X\circ F(h)} \Sigma' F(X)$ in $\mathcal{D}'$. For $n\geq 1$, we define natural isomorphisms $\omega^n\colon F\Sigma^n\rightarrow \Sigma'^nF$ inductively: $\omega^1=\omega$ and $\omega^{n+1}=\Sigma' \omega^n \circ \omega\Sigma^n$. When the natural isomorphism $\omega$ is irrelevant, we will denote the triangle functor $(F, \omega)$ simply by $F$.

We assume that both $\mathcal{D}$ and $\mathcal{D}'$ have bounded $t$-structures, whose hearts are denoted by $\mathcal{A}$ and $\mathcal{A}'$, respectively. A triangle functor $F\colon \mathcal{D}\rightarrow \mathcal{D}'$ is \emph{$t$-exact} provided that $F(\mathcal{A})\subseteq \mathcal{A}'$.

\begin{lem}\label{lem:t-ex}
Let $F\colon \mathcal{D}\rightarrow \mathcal{D}'$ be a $t$-exact functor as above. Then the following statements hold.
\begin{enumerate}
\item The restriction $F|_\mathcal{A}\colon \mathcal{A}\rightarrow \mathcal{A}'$ is exact.
\item There are natural isomorphisms $H_{\mathcal{A}'}^n(F(X))\simeq F|_\mathcal{A}(H_\mathcal{A}^n(X))$ for $X\in \mathcal{D}$ and $n\in\mathbb{Z}$.
    \item If $F$ is an equivalence, then so is the restriction $F|_\mathcal{A}$.
\end{enumerate}
\end{lem}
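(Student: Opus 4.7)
The plan is to treat the three parts in order, relying on boundedness of the $t$-structures together with the canonical map $\theta^1$ from Subsection~\ref{sec:canonial}.

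For (1), I would start with a short exact sequence $0\to X\to Y\to Z\to 0$ in $\mathcal{A}$ and use Lemma~\ref{lem:can}(1) to rewrite it as an exact triangle $X\to Y\to Z\to \Sigma(X)$ in $\mathcal{D}$. Applying the triangle functor $F$ yields an exact triangle in $\mathcal{D}'$ whose three leftmost terms lie in $\mathcal{A}'$ by $t$-exactness. The cohomological long exact sequence attached to $H^0_{\mathcal{A}'}$ collapses, since $H^n_{\mathcal{A}'}$ vanishes on $\mathcal{A}'$ for $n\neq 0$, delivering the desired short exact sequence $0\to F(X)\to F(Y)\to F(Z)\to 0$ in $\mathcal{A}'$.

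For (2), the first task is to upgrade the hypothesis $F(\mathcal{A})\subseteq \mathcal{A}'$ to the aisle inclusions $F(\mathcal{D}^{\leq 0})\subseteq {\mathcal{D}'}^{\leq 0}$ and $F(\mathcal{D}^{\geq 0})\subseteq {\mathcal{D}'}^{\geq 0}$. This should follow from the description
\[\mathcal{D}^{\leq 0}=\bigcup_{n\geq 0} \Sigma^n(\mathcal{A})\ast\cdots\ast\Sigma(\mathcal{A})\ast\mathcal{A}\]
recalled earlier (together with its dual), combined with the fact that any triangle functor commutes with $\Sigma$ via $\omega$ and transports the $\ast$-operation. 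Once this is in place, $F$ commutes with the truncation functors $\tau^{\leq n}$ and $\tau^{\geq n}$ (up to the natural isomorphisms $\omega^n$) by the essential uniqueness of the truncation triangles, and the natural isomorphism $H^n_{\mathcal{A}'}\circ F\simeq F|_\mathcal{A}\circ H^n_\mathcal{A}$ drops out.

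For (3), the restriction $F|_\mathcal{A}$ is automatically fully faithful, so only essential surjectivity needs argument. Given $X'\in \mathcal{A}'$, I would choose $Y\in \mathcal{D}$ with $F(Y)\simeq X'$, possible since $F$ is an equivalence. Part (2) then yields $F|_\mathcal{A}(H^n_\mathcal{A}(Y))\simeq H^n_{\mathcal{A}'}(X')=0$ for all $n\neq 0$, and since $F$ is conservative, this forces $H^n_\mathcal{A}(Y)=0$ for $n\neq 0$. Boundedness of the $t$-structure on $\mathcal{D}$ then implies $Y\simeq H^0_\mathcal{A}(Y)\in \mathcal{A}$, completing the essential surjectivity of $F|_\mathcal{A}$. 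No single step poses a serious obstacle; the place requiring the most care is the upgrade in (2) from heart-preservation to aisle-preservation, and this is precisely where boundedness of the $t$-structure is used in an essential way.
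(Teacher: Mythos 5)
Your proposal is correct and follows essentially the same route as the paper: part (1) via the characterization of short exact sequences in a heart by exact triangles, part (2) via commutation of $F$ with the truncation functors, and part (3) by reducing denseness of $F|_\mathcal{A}$ to the cohomological vanishing supplied by (2). The only difference is that you spell out the upgrade from $F(\mathcal{A})\subseteq \mathcal{A}'$ to preservation of the aisles (using boundedness and the $\ast$-descriptions), which the paper delegates to the citation of \cite[1.3.3]{BBD}.
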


\begin{proof}
Recall that a sequence  $0\rightarrow A\stackrel{u}\rightarrow B\stackrel{v}\rightarrow C\rightarrow 0$ in $\mathcal{A}$  is exact if and only if it fits into an exact triangle $A\stackrel{u}\rightarrow B\stackrel{v}\rightarrow C\rightarrow \Sigma(A)$ in $\mathcal{D}$; a similar remark holds for $\mathcal{A}'$. Then we infer (1).

Since $F$ is $t$-exact,  it commutes with the truncation functors in the sense \cite[1.3.3]{BBD}. Then (2) follows immediately, since $H^n_\mathcal{A}$ and $H^n_{\mathcal{A}'}$ are composition of these truncation functors. For (3), it suffices to show the denseness of $F|_\mathcal{A}$. Applying (2), we infer that if $F(X)$ lies in $\mathcal{A}'$, then $X$ necessarily lies in $\mathcal{A}$. This proves the required denseness.
\end{proof}

\begin{prop}\label{prop:t-ex}
Let $(F, \omega)\colon \mathcal{D}\rightarrow \mathcal{D}'$ be a $t$-exact functor as above. Then the following diagram commutes
\[\xymatrix{
{\rm Yext}_\mathcal{A}^n(X, Y) \ar[d]_-{\theta^n} \ar[rr]^-{F|_\mathcal{A}} && {\rm Yext}_{\mathcal{A}'}^n(F(X), F(Y))\ar[d]^-{\theta'^n}\\
{\rm Hom}_\mathcal{D}(X, \Sigma^n(Y))\ar[rr]^-{(F, \omega)} && {\rm Hom}_{\mathcal{D}'}(F(X), \Sigma'^nF(Y))
}\]
for any $X, Y\in \mathcal{A}$ and $n\geq 1$.
\end{prop}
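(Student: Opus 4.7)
The plan is to prove commutativity by induction on $n$, using the recursive definition of $\theta^n$ via Yoneda splicing. Since $F|_\mathcal{A}\colon \mathcal{A}\to\mathcal{A}'$ is exact by Lemma~\ref{lem:t-ex}(1), it preserves short exact sequences and Yoneda splicings, hence induces a well-defined map on Yext groups, so the diagram makes sense.

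For the base case $n=1$, I would take an exact sequence $\xi\colon 0\to Y\to E\to X\to 0$ in $\mathcal{A}$; it determines an exact triangle $Y\to E\to X\xrightarrow{\theta^1(\xi)}\Sigma(Y)$ in $\mathcal{D}$. Applying the triangle functor $(F,\omega)$ yields the exact triangle $F(Y)\to F(E)\to F(X)\xrightarrow{\omega_Y\circ F(\theta^1(\xi))}\Sigma' F(Y)$ in $\mathcal{D}'$, which is exactly the triangle attached to the short exact sequence $F|_\mathcal{A}(\xi)$ in $\mathcal{A}'$. By the uniqueness of the connecting morphism one obtains $\theta'^1(F|_\mathcal{A}(\xi))=\omega_Y\circ F(\theta^1(\xi))$, which is the image of $\theta^1(\xi)$ under $(F,\omega)$.

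For the inductive step, I would write $[\xi]=[\xi_1\cup\xi_2]$ with $[\xi_1]\in{\rm Yext}^1_\mathcal{A}(Z,Y)$ and $[\xi_2]\in{\rm Yext}^n_\mathcal{A}(X,Z)$. Exactness of $F|_\mathcal{A}$ gives $F|_\mathcal{A}(\xi)=F|_\mathcal{A}(\xi_1)\cup F|_\mathcal{A}(\xi_2)$, so the recursive definition of $\theta'^{n+1}$ together with the base case and the induction hypothesis give
\[
\theta'^{n+1}(F|_\mathcal{A}(\xi))=\Sigma'^n\bigl(\omega_Y\circ F(\theta^1(\xi_1))\bigr)\circ \omega^n_Z\circ F(\theta^n(\xi_2)).
\]
To match this against $\omega^{n+1}_Y\circ F(\theta^{n+1}(\xi))$, I would apply naturality of $\omega^n$ at the morphism $\theta^1(\xi_1)\colon Z\to\Sigma(Y)$ to get $\Sigma'^n F(\theta^1(\xi_1))\circ\omega^n_Z=\omega^n_{\Sigma Y}\circ F(\Sigma^n\theta^1(\xi_1))$, and then invoke the identity $\omega^{n+1}_Y=\Sigma'^n(\omega_Y)\circ\omega^n_{\Sigma Y}$, which follows by a short induction from the recursive definition $\omega^{n+1}=\Sigma'\omega^n\circ\omega\Sigma^n$.

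The only real obstacle is clerical: tracking the coherence isomorphisms $\omega^n$ so that the image of $\theta^{n+1}(\xi)$ under $(F,\omega^{n+1})$ is assembled correctly from the images of $\theta^1(\xi_1)$ and $\theta^n(\xi_2)$ under their respective coherence data. The triangle-theoretic substance of the statement is entirely concentrated in the $n=1$ case, where the compatibility is forced by the very definition of a triangle functor; everything for $n\geq 2$ is a consequence of that together with naturality.
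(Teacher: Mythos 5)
Your proof is correct and follows essentially the same route as the paper: the $n=1$ case is handled identically (apply the triangle functor and use the defining uniqueness of the connecting morphism), and the paper simply asserts that the general case follows by induction. You additionally spell out that induction, and your bookkeeping is right: the naturality of $\omega^n$ at $\theta^1(\xi_1)$ together with the identity $\omega^{n+1}_Y=\Sigma'^n(\omega_Y)\circ\omega^n_{\Sigma Y}$ (which indeed follows by a short induction from $\omega^{n+1}=\Sigma'\omega^n\circ\omega\Sigma^n$) yields exactly $\theta'^{n+1}(F|_\mathcal{A}(\xi))=\omega^{n+1}_Y\circ F(\theta^{n+1}(\xi))$.
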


In the diagram above, the columns are the canonical maps associated to the two $t$-structures, the upper row sends an exact sequence $\xi$  in $\mathcal{A}$ to the exact sequence $F|_\mathcal{A}(\xi)$ in $\mathcal{A}'$, and the lower row sends a morphism $f\colon X\rightarrow \Sigma^n(Y)$ to $\omega^n_Y\circ F(f)\colon F(X)\rightarrow \Sigma'^nF(Y)$.

\begin{proof}
Set $F'=F|_\mathcal{A}$. It suffices to prove the statement for the case $n=1$, since then the general case can be shown by induction.

Take an exact sequence $\xi \colon 0\rightarrow Y\stackrel{f}\rightarrow E\stackrel{g}\rightarrow X\rightarrow 0$ in $\mathcal{A}$. Then we have an exact triangle $Y\stackrel{f}\rightarrow E\stackrel{g}\rightarrow X\xrightarrow{\theta^1(\xi)} \Sigma(Y)$ in $\mathcal{D}$. Applying $(F, \omega)$ to it, we obtain the following exact triangle in $\mathcal{D}'$
$$F(Y) \stackrel{F(f)} \longrightarrow F(E) \stackrel{F(g)}\longrightarrow F(X) \xrightarrow{\omega_Y\circ F(\theta^1(\xi))} \Sigma'F(Y).$$
The morphisms $F(f)$ and $F(g)$ appear in the exact sequence $F'(\xi)$ in $\mathcal{A}'$. Hence, by the very definition of ${\theta'}^1$, we infer $\omega_Y\circ F(\theta^1(\xi))={\theta'}^1(F'(\xi))$, proving the statement in this case. Then we are done.
\end{proof}


We draw two consequences, Corollaries \ref{cor:1} and \ref{cor:2},  of Proposition \ref{prop:t-ex}.

\begin{cor}\label{cor:1}
Let $F\colon \mathbf{D}^b(\mathcal{A})\rightarrow \mathbf{D}^b(\mathcal{A}')$ be a triangle functor satisfying $F(\mathcal{A})\subseteq \mathcal{A}'$. Then $F$ is an equivalence if and only if so is the restriction $F|_\mathcal{A}\colon \mathcal{A}\rightarrow \mathcal{A}'$
\end{cor}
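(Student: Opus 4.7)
The ``only if'' direction is Lemma \ref{lem:t-ex}(3): since $F(\mathcal{A})\subseteq\mathcal{A}'$ and both derived categories carry their canonical bounded $t$-structures with hearts $\mathcal{A}$ and $\mathcal{A}'$, the functor $F$ is $t$-exact, so an equivalence $F$ restricts to an equivalence on hearts. Conversely, assume $F':=F|_\mathcal{A}\colon\mathcal{A}\to\mathcal{A}'$ is an equivalence; it suffices to prove $F$ is fully faithful and dense.

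For fully-faithfulness, first consider $X,Y\in\mathcal{A}$. By Example \ref{exm:1}, the canonical maps $\chi^n$ and ${\chi'}^n$ are isomorphisms for all $n\geq 1$, and Proposition \ref{prop:t-ex} gives the commutative square
\[\xymatrix{
{\rm Yext}_\mathcal{A}^n(X, Y) \ar[d]_-{\chi^n}^-{\cong} \ar[rr]^-{F'}_-{\cong} && {\rm Yext}_{\mathcal{A}'}^n(F(X), F(Y))\ar[d]^-{{\chi'}^n}_-{\cong}\\
{\rm Hom}_{\mathbf{D}^b(\mathcal{A})}(X, \Sigma^n Y)\ar[rr]^-{F} && {\rm Hom}_{\mathbf{D}^b(\mathcal{A}')}(F(X), \Sigma^n F(Y))
}\]
whose top row is an isomorphism since $F'$ is an equivalence. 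Hence the bottom row is an isomorphism for all $n\geq 1$; for $n=0$ it is the fully-faithfulness of $F'$, and for $n<0$ both sides vanish. To pass from stalks to arbitrary objects of $\mathbf{D}^b(\mathcal{A})$, I would proceed by induction on the cohomological length: every object $X$ fits into an exact triangle $\tau^{\leq k}X\to X\to\tau^{>k}X\to\Sigma\tau^{\leq k}X$ with strictly smaller cohomological length on each side, and applying the five-lemma to the long exact sequence obtained from $\mathrm{Hom}_{\mathbf{D}^b(\mathcal{A})}(-,W)$ and $\mathrm{Hom}_{\mathbf{D}^b(\mathcal{A}')}(F(-),F(W))$ propagates the isomorphism to all objects.

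For density, first note that $\mathcal{A}'$ lies in the essential image of $F$ because $F'$ is dense. A general object $Z\in\mathbf{D}^b(\mathcal{A}')$ has only finitely many nonzero cohomologies, so using truncations it fits as an iterated cone built from shifts of objects of $\mathcal{A}'$. Inductively, if $Z$ sits in an exact triangle $Z_1\to Z\to Z_2\to\Sigma Z_1$ with $Z_i\cong F(X_i)$ already in the essential image, the fully-faithfulness established above lifts the connecting morphism $F(X_2)\to\Sigma F(X_1)$ to some $X_2\to\Sigma X_1$ in $\mathbf{D}^b(\mathcal{A})$, and a cone $X$ of this lift satisfies $F(X)\cong Z$ by the axioms of triangle functors. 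The main obstacle is the base case of fully-faithfulness on stalks; once that is secured through Proposition \ref{prop:t-ex} and the isomorphisms $\chi^n$, the rest is a standard d\'{e}vissage along the bounded $t$-structure.
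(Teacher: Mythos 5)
Your proof is correct and follows essentially the same route as the paper: the ``only if'' part via Lemma \ref{lem:t-ex}(3), and the ``if'' part by combining Proposition \ref{prop:t-ex} with the isomorphisms $\chi^n$ of Example \ref{exm:1} to get isomorphisms on all $\mathrm{Hom}$-groups between stalk complexes, then extending to all of $\mathbf{D}^b(\mathcal{A})$. The only difference is that the paper dispatches the final d\'{e}vissage by citing Happel's Lemma II.3.4, whereas you carry out the truncation-and-five-lemma argument (and the cone-lifting for density) explicitly; both are fine.
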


\begin{proof}
The ``only if" part follows from Lemma \ref{lem:t-ex}(3). For the ``if" part, we apply Proposition \ref{prop:t-ex}. In the commutative diagram, the upper row is an isomorphism induced by the equivalence $F|_\mathcal{A}$, and the columns are isomorphisms; see Example~\ref{exm:1}. It follows that $F$ induces isomorphisms
$${\rm Ext}^n_\mathcal{A}(X, Y)\longrightarrow {\rm Ext}^n_{\mathcal{A}'}(F(X), F(Y))$$
for all $X, Y\in \mathcal{A}$ and $n\geq 1$.  We observe that the isomorphisms hold also for $n\leq 0$. Recall that stalk complexes generate the bounded derived categories. Then we are done by \cite[Lemma II.3.4]{Hap}.
\end{proof}

In what follows, we assume that  $\mathcal{D}$ is  a triangulated category with a bounded $t$-structure $(\mathcal{D}^{\leq 0}, \mathcal{D}^{\geq 0})$  and its heart $\mathcal{A}$. By a \emph{realization functor}, we mean a triangle functor $(F, \omega)\colon \mathbf{D}^b(\mathcal{A})\rightarrow \mathcal{D}$ which is $t$-exact satisfying $F|_\mathcal{A}={\rm Id}_\mathcal{A}$; compare \cite[3.1]{BBD} and \cite[Appendix]{Bei}. Such a realization functor exists provided that $\mathcal{D}$ is algebraic, that is, triangle equivalent to the stable category of a Frobenius category; see \cite[3.2]{KV} and \cite[Section 3]{CR}.

\begin{rem}\label{rmk}
In \cite[Appendix]{Bei}, there is an explicit construction of a realization functor, which depends on the particular choice of  a filtered triangulated category over $\mathcal{D}$. The uniqueness of realization functors in general is not known. Indeed, as mentioned in \cite[Remark 4.9]{SR}, this uniqueness problem is intimately related to the open question whether any derived equivalences between algebras are standard; see the remarks after \cite[Definition 3.4]{Ric}. We mention that the realization functor becomes unique if  it is the base of a tower; compare \cite[Corollary 2.7]{Ke}.
\end{rem}

We have the following easy observation.

\begin{lem}\label{lem:real}
Let $F\colon \mathbf{D}^b(\mathcal{A})\rightarrow \mathcal{D}$  be a realization functor as above and $X\in \mathbf{D}^b(\mathcal{A})$. Then the following statements hold.
 \begin{enumerate}
 \item There are natural isomorphisms $H_\mathcal{A}^n(F(X))\simeq H^n(X)$ for $n\in \mathbb{Z}$.
 \item Assume that $F(X)\simeq A\in \mathcal{A}$. Then $X$ is isomorphic to $A$ in $\mathbf{D}^b(\mathcal{A})$.
 \end{enumerate}
\end{lem}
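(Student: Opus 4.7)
The plan is to derive both statements essentially as immediate corollaries of Lemma \ref{lem:t-ex}(2) applied to $F$, together with the standard fact that in a bounded derived category an object whose cohomology is concentrated in a single degree is isomorphic to the corresponding stalk complex.

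For (1), I would observe that $F$ is by definition $t$-exact, with source $\mathbf{D}^b(\mathcal{A})$ carrying its canonical $t$-structure (whose heart is $\mathcal{A}$ and whose cohomological functor $H_\mathcal{A}^n$ is just the ordinary cohomology $H^n$, cf.\ Example \ref{exm:1}) and target $\mathcal{D}$ carrying the given bounded $t$-structure (also with heart $\mathcal{A}$). Lemma \ref{lem:t-ex}(2) then supplies natural isomorphisms $H_\mathcal{A}^n(F(X))\simeq F|_\mathcal{A}(H^n(X))$, and since by definition of a realization functor $F|_\mathcal{A}={\rm Id}_\mathcal{A}$, this yields the desired $H_\mathcal{A}^n(F(X))\simeq H^n(X)$.

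For (2), I would combine (1) with the hypothesis $F(X)\simeq A\in\mathcal{A}$. Since $A$ is in the heart, $H_\mathcal{A}^0(A)=A$ and $H_\mathcal{A}^n(A)=0$ for $n\neq 0$. Transporting this along the isomorphisms in (1) gives $H^0(X)\simeq A$ and $H^n(X)=0$ for all $n\neq 0$. It remains to conclude that an object of $\mathbf{D}^b(\mathcal{A})$ whose cohomology is concentrated in degree zero with value $A$ is isomorphic to the stalk complex $A$. I would do this with the standard truncation argument on $\mathbf{D}^b(\mathcal{A})$: the triangles $\tau^{\leq 0}X\to X\to\tau^{>0}X\to\Sigma(\tau^{\leq 0}X)$ and $\tau^{<0}X\to\tau^{\leq 0}X\to H^0(X)\to\Sigma(\tau^{<0}X)$, together with the fact that a bounded complex with vanishing cohomology is zero in the derived category, force $\tau^{>0}X=0$ and $\tau^{<0}X=0$, so that $X\simeq H^0(X)\simeq A$.

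The argument is routine; there is no real obstacle. The only point to be slightly careful about is invoking Lemma \ref{lem:t-ex}(2) for the realization functor specifically, which requires one to recognize that the cohomological functor of the canonical $t$-structure on $\mathbf{D}^b(\mathcal{A})$ is the ordinary cohomology, a fact already recorded in Example \ref{exm:1}; and to ensure that the truncation step is applied inside $\mathbf{D}^b(\mathcal{A})$ rather than inside $\mathcal{D}$, so that boundedness gives the vanishing of cohomologically trivial complexes.
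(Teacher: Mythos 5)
Your proof is correct and follows essentially the same route as the paper: part (1) is Lemma \ref{lem:t-ex}(2) combined with $F|_\mathcal{A}={\rm Id}_\mathcal{A}$, and part (2) deduces from the vanishing of $H^n(X)$ for $n\neq 0$ that $X$ lies in $\mathcal{A}$ and is isomorphic to $A$. You merely spell out the truncation step that the paper leaves implicit.
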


\begin{proof}
The first statement follows by Lemma \ref{lem:t-ex}(2). For the second one, we have $0=H_\mathcal{A}^n(F(X))\simeq H^n(X)$ for $n\neq 0$. Then $X$ lies in $\mathcal{A}$; moreover, it is isomorphic to $A$.
\end{proof}

The second part of the following result is standard; compare \cite[Proposition~3.1.16]{BBD}, \cite[Excercises IV.4.1 b)]{GM} and \cite[Theorem 4.7]{SR}.

\begin{cor}\label{cor:2}
Let $(F, \omega)\colon \mathbf{D}^b(\mathcal{A})\rightarrow \mathcal{D}$ be a realization functor as above.  Assume that $X, Y \in \mathcal{A}$ and $n\geq 1$. Then the following triangle
\begin{align}\label{equ:tri}
\xymatrix{
{\rm Ext}_\mathcal{A}^n(X, Y) \ar[rr]^-{(F, \omega)} && {\rm Hom}_\mathcal{D}(X, \Sigma^n(Y))\\
& {\rm Yext}^n_\mathcal{A}(X, Y) \ar[ur]_-{\theta^n} \ar[ul]^-{\chi^n}
}\end{align}
is commutative. Therefore, $F$ induces an isomorphism ${\rm Ext}_\mathcal{A}^1(X, Y)\rightarrow {\rm Hom}_\mathcal{D}(X, \Sigma(Y))$ and an injective map ${\rm Ext}_\mathcal{A}^2(X, Y)\rightarrow {\rm Hom}_\mathcal{D}(X, \Sigma^2(Y))$.

Consequently,  the following statements are equivalent:
\begin{enumerate}
\item The realization functor $F$ is full;
\item The realization functor $F$ is an equivalence;
\item The canonical maps $\theta^n$ are isomorphisms for all $n\geq 1$;
\item The canonical maps $\theta^n$ are surjective for all $n\geq 1$.
\end{enumerate}
\end{cor}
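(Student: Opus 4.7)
The plan is to deduce the commutative triangle directly from Proposition~\ref{prop:t-ex}, to read off the claims about ${\rm Ext}^1$ and ${\rm Ext}^2$ from Lemma~\ref{lem:can}(1), and then to cycle through (1)--(4) by combining the triangle with an inductive application of Lemma~\ref{lem:can}(2) together with a d\'evissage on the length of the cohomology support.

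First I would apply Proposition~\ref{prop:t-ex} to the realization functor $(F,\omega)\colon \mathbf{D}^b(\mathcal{A})\to \mathcal{D}$, regarded as a $t$-exact functor with $\mathbf{D}^b(\mathcal{A})$ equipped with its canonical $t$-structure. The canonical map $\theta^n$ for this source $t$-structure is, by Example~\ref{exm:1}, precisely $\chi^n$. Since $F|_{\mathcal{A}}={\rm Id}_\mathcal{A}$, the top horizontal arrow of the commutative square supplied by Proposition~\ref{prop:t-ex} is the identity on ${\rm Yext}^n_\mathcal{A}(X,Y)$, so the square collapses to the required triangle~(\ref{equ:tri}). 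Reading the triangle as the equation $(F,\omega)=\theta^n\circ(\chi^n)^{-1}$ on ${\rm Ext}^n_\mathcal{A}(X,Y)$, the ${\rm Ext}^1$ and ${\rm Ext}^2$ statements follow at once from Lemma~\ref{lem:can}(1) together with the bijectivity of $\chi^n$ recorded in Example~\ref{exm:1}.

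For the equivalence of (1)--(4), the implications (2)$\Rightarrow$(1) and (3)$\Rightarrow$(4) are formal. The implication (1)$\Rightarrow$(4) follows because, in the triangle, $\chi^n$ is bijective, so surjectivity of the bottom map $(F,\omega)$ transfers to surjectivity of $\theta^n$. For (4)$\Rightarrow$(3) I proceed by induction on $n$: Lemma~\ref{lem:can}(1) handles $n=1$, and if $\theta^k_{A,B}$ is an isomorphism for every $A,B\in\mathcal{A}$ and every $k\le n$, then Lemma~\ref{lem:can}(2) forces $\theta^{n+1}$ to be injective, which combined with the surjectivity hypothesis promotes it to an isomorphism.

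The decisive step is (3)$\Rightarrow$(2). Assuming (3), the commutative triangle together with the bijectivity of both $\chi^n$ and $\theta^n$ shows that $(F,\omega)$ induces isomorphisms on ${\rm Ext}^n_\mathcal{A}(X,Y)$ for all $X,Y\in\mathcal{A}$ and all $n\ge 1$; the cases $n\le 0$ are automatic since $F|_\mathcal{A}={\rm Id}_\mathcal{A}$ and negative Ext groups vanish inside a heart on both sides. An appeal to \cite[Lemma~II.3.4]{Hap}, exactly as in the proof of Corollary~\ref{cor:1}, then upgrades this to the full-faithfulness of $F$ on $\mathbf{D}^b(\mathcal{A})$. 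The main obstacle is essential surjectivity: I expect to handle it by induction on the length of the cohomology support of $Z\in\mathcal{D}$, using the truncation triangles coming from the $t$-structure on $\mathcal{D}$ to express $Z$ as an iterated extension of shifted stalks $\Sigma^{-n}H^n_\mathcal{A}(Z)$, each of which is trivially in the essential image of $F$ since $F|_\mathcal{A}={\rm Id}_\mathcal{A}$, and then lifting these triangles along $F$ via the already established full-faithfulness on Ext groups.
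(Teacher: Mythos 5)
Your proof is correct, and the first half (the commutative triangle via Proposition~\ref{prop:t-ex} with the source canonical maps identified with $\chi^n$ as in Example~\ref{exm:1}, the ${\rm Ext}^1$/${\rm Ext}^2$ claims from Lemma~\ref{lem:can}(1), the induction for (4)$\Rightarrow$(3) via Lemma~\ref{lem:can}(1)(2), and the identification of (3) with fully-faithfulness through \cite[Lemma~II.3.4]{Hap}) coincides with the paper's argument. Where you genuinely diverge is in how the fullness hypothesis (1) is exploited: the paper proves (1)$\Rightarrow$(2) directly, first checking via Lemma~\ref{lem:real}(1) that $F$ annihilates no nonzero object and then invoking Rickard's observation \cite[p.446]{Ric89} that a full triangle functor with this property is automatically faithful, whereas you bypass that external input entirely by noting that fullness already gives surjectivity of the bottom map of the triangle, hence of $\theta^n$, so that the chain (1)$\Rightarrow$(4)$\Rightarrow$(3)$\Rightarrow$(2) closes the cycle using only the canonical-map formalism. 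The trade-off is that you must then supply essential surjectivity inside (3)$\Rightarrow$(2), which you do by the standard d\'evissage on the cohomological length of $Z\in\mathcal{D}$ (truncation triangles, lifting connecting morphisms by the established full-faithfulness, and comparing cones); the paper compresses exactly this step into the one-line remark that the heart generates $\mathcal{D}$. Your route is slightly longer but more self-contained, and it makes visible that for realization functors the delicate ``full implies faithful'' issue highlighted in \cite{COS} can be sidestepped, while the paper's route is shorter at the price of citing Rickard's trick.
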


We observe that the condition (3) is independent of the choice of such a realization functor $F$. Hence, if one of the realization functors of the given $t$-structure is an equivalence, then all the realization functors are equivalences.

\begin{proof}
The triangle is commutative by Proposition \ref{prop:t-ex}, while the map $\chi^n$ is an isomorphism; see (\ref{iso:chi}). Then we apply Lemma \ref{lem:can}(1).

``(1) $\Rightarrow$ (2)"\quad By Lemma \ref{lem:real}(1), we infer that $F$ is faithful on objects. Then $F$ is fully faithful by \cite[p.446]{Ric89}. Since the heart $\mathcal{A}$ generates $\mathcal{D}$, it follows that $F$ is dense. The implication ``(4) $\Rightarrow$ (3)" follows by applying Lemma \ref{lem:can}(1)(2) repeatedly.

To complete the proof, it suffices to claim that (3) is equivalent to the fully-faithfulness of $F$. Indeed, by \cite[Lemma II.3.4]{Hap}, the latter is equivalent to the condition that the upper row of (\ref{equ:tri}) is an isomorphism. Since $\chi^n$ is an isomorphism,  the claim follows immediately.
\end{proof}

\begin{thm}\label{thm:2}
Let $\mathcal{D}$ be a triangulated category with a bounded $t$-structure and its heart $\mathcal{A}$, and $(F, \omega)\colon \mathbf{D}^b(\mathcal{A})\rightarrow \mathcal{D}$ be its realization functor. Assume that $F$ is dense. Then $F$ is an equivalence.
\end{thm}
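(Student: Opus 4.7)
The plan is to reduce Theorem~\ref{thm:2} to Corollary~\ref{cor:2} by showing that density of $F$ forces each canonical map $\theta^n\colon {\rm Yext}^n_\mathcal{A}(X,Y)\to{\rm Hom}_\mathcal{D}(X,\Sigma^n(Y))$ to be surjective for all $X,Y\in\mathcal{A}$ and $n\geq 1$. The case $n=1$ is immediate from Lemma~\ref{lem:can}(1). For $n\geq 2$, given a morphism $f\colon X\to\Sigma^n(Y)$, I would complete it to an exact triangle
\[
\Sigma^{n-1}(Y)\longrightarrow Z\longrightarrow X\xrightarrow{f}\Sigma^n(Y)
\]
in $\mathcal{D}$. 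A short inspection of the long exact sequence of $H^\bullet_\mathcal{A}$ shows that $Z$ has cohomology concentrated in exactly two degrees, namely $H^0_\mathcal{A}(Z)\simeq X$ and $H^{1-n}_\mathcal{A}(Z)\simeq Y$.

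The key idea is to lift $Z$ back along $F$. By density, there is $W\in\mathbf{D}^b(\mathcal{A})$ with $F(W)\simeq Z$, and Lemma~\ref{lem:real}(1) transports the cohomology so that $H^0(W)\simeq X$, $H^{1-n}(W)\simeq Y$, and $H^i(W)=0$ otherwise. Since $W$ has cohomology concentrated in two degrees with nothing in between, its canonical truncation triangle in $\mathbf{D}^b(\mathcal{A})$ has the shape
\[
\Sigma^{n-1}(Y)\longrightarrow W\longrightarrow X\xrightarrow{f''}\Sigma^n(Y)
\]
for some $f''\in{\rm Ext}^n_\mathcal{A}(X,Y)$. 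Because $(F,\omega)$ is $t$-exact and commutes with truncations, its image is the canonical truncation triangle of $F(W)$, with rightmost morphism $\omega^n_Y\circ F(f'')$.

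The main obstacle, and essentially the only subtle point, is that the original triangle and the image of the truncation triangle of $W$ share their outer objects only up to an unspecified isomorphism, so their rightmost morphisms need not coincide on the nose. The remedy is the functoriality of $t$-truncation: any isomorphism $\alpha\colon F(W)\xrightarrow{\sim}Z$ induces automorphisms $\phi\in{\rm Aut}_\mathcal{A}(X)$ and $\tilde{\psi}\in{\rm Aut}_\mathcal{A}(Y)$ fitting into an isomorphism of truncation triangles, yielding
\[
f\circ\phi\;=\;\Sigma^n(\tilde{\psi})\circ\omega^n_Y\circ F(f'').
\]
By Proposition~\ref{prop:t-ex} applied to the canonical $t$-structure on $\mathbf{D}^b(\mathcal{A})$ of Example~\ref{exm:1}, the composite $\omega^n_Y\circ F(f'')$ equals $\theta^n(\xi)$ for the Yoneda class $\xi:=(\chi^n)^{-1}(f'')$. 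Hence $f=\Sigma^n(\tilde{\psi})\circ\theta^n(\xi)\circ\phi^{-1}$, and naturality of $\theta^n$ in both variables exhibits the right-hand side as $\theta^n$ of the extension obtained by pushing $\xi$ out along $\tilde{\psi}$ and pulling back along $\phi^{-1}$. Thus $f$ lies in the image of $\theta^n$, and the implication (4)$\Rightarrow$(2) of Corollary~\ref{cor:2} gives that $F$ is an equivalence.
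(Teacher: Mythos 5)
Your proof is correct, but its core step differs from the paper's. The shared skeleton: both reduce via Corollary \ref{cor:2}(4) to surjectivity of $\theta^n_{X,Y}$ for $n\geq 2$, and both apply denseness to (a rotation of) the cone of $f\colon X\rightarrow \Sigma^n(Y)$, using Lemma \ref{lem:real}(1) to transport cohomology to the lift. From there the paper replaces the lift by an explicit complex concentrated in degrees $-n,\dots,-1$, extracts a monomorphism $y\colon Y\rightarrow Z^{-n}$, factors $f$ through $\Sigma^{n-1}({\rm coker}\,y)$, and finishes by the factorization criterion of Lemma \ref{lem:can}(3) plus induction on $n$; you never invoke Lemma \ref{lem:can}(3) or induction, but instead read off a class $f''\in {\rm Ext}^n_\mathcal{A}(X,Y)$ from the truncation triangle of the lift $W$, compare the two truncation triangles of $F(W)\simeq Z$, and use Proposition \ref{prop:t-ex} together with pushout--pullback naturality of $\theta^n$ to exhibit $f$ as $\theta^n$ of a Yoneda class in one shot. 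This is a legitimate alternative: what it buys is the elimination of the inductive, degree-lowering step, and what it costs is two auxiliary facts you should state and justify (both standard): (i) an isomorphism $F(W)\simeq Z$ extends uniquely to an isomorphism of the two truncation triangles (\cite[1.3.3]{BBD}, using ${\rm Hom}_\mathcal{D}(\Sigma\mathcal{D}^{\leq 0},\mathcal{D}^{\geq 0})=0$), so the automorphisms $\phi$ and $\tilde{\psi}$ exist and are induced by automorphisms in $\mathcal{A}$; and (ii) the bifunctoriality $\theta^n(\tilde{\psi}.[\xi].\phi^{-1})=\Sigma^n(\tilde{\psi})\circ \theta^n(\xi)\circ \phi^{-1}$, which reduces by splicing to the case $n=1$, where the completion given by (TR3) is forced to be the expected morphism of triangles because ${\rm Hom}_\mathcal{D}(\Sigma(A),A')=0$ for $A,A'\in \mathcal{A}$. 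The paper's route needs nothing beyond the compatibilities already used in the proof of Lemma \ref{lem:can}, at the price of the extra inductive step; yours is shorter conceptually but leans on these naturality properties of $\theta^n$, which the paper never states explicitly.
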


\begin{proof}
By Corollary \ref{cor:2}(4) and Lemma~\ref{lem:can}(1), it suffices to prove that the canonical map $\theta_{X, Y}^n$ is surjective for each $X, Y\in \mathcal{A}$ and $n\geq 2$. Take a morphism $f\colon X\rightarrow \Sigma^n(Y)$ in $\mathcal{D}$. By Lemma \ref{lem:can}(3) and induction, it suffices to prove that $f$ admits a factorization $X\rightarrow \Sigma^{n-1}(C)\rightarrow \Sigma^n(Y)$ for some $C\in \mathcal{A}$.

By the denseness of $F$, we have an exact triangle in $\mathcal{D}$
$$X\stackrel{f} \longrightarrow \Sigma^n(Y) \stackrel{a}\longrightarrow F(Z) \longrightarrow \Sigma(X) $$
for some complex $Z\in \mathbf{D}^b(\mathcal{A})$. Applying the cohomological functor $H^0_\mathcal{A}$ to the triangle, we infer that $H_\mathcal{A}^i(F(Z))=0$ for $i\neq -1, -n$; moreover, $H^{-n}_\mathcal{A}(a)$ is an isomorhism. By Lemma \ref{lem:real}(1), we have $H^i(Z)=0$ for $i\neq -1, -n$. Hence, by truncation, we may assume that the complex $Z$ is of the following form
$$\rightarrow 0\rightarrow Z^{-n} \rightarrow \cdots \rightarrow Z^{-2} \rightarrow Z^{-1}\rightarrow 0\rightarrow \cdots$$
Denote by $p\colon Z\rightarrow \Sigma^n(Z^{-n})$ the canonical projection.

There is a unique morphism $y\colon Y\rightarrow Z^{-n}$ in $\mathcal{A}$ such that $\Sigma^n(y)$ equals the following composition
$$\Sigma^n(Y)\stackrel{a}\longrightarrow F(Z) \xrightarrow{F(p)} F(\Sigma^n(Z^{-n})) \xrightarrow{\omega^n_{Z^{-n}}} \Sigma^n(F(Z^{-n}))=\Sigma^n(Z^{-n}),$$
where $\omega^n\colon F\Sigma^n\rightarrow \Sigma^n F$ is the natural isomorphism induced by $\omega$.

We observe that  $H^{-n}(p)$ is mono. By the isomorphism in Lemma \ref{lem:real}(1),  the morphism $H^{-n}_\mathcal{A}(F(p))$ is mono. Since $H^{-n}_\mathcal{A}(a)$ is an isomorphism, it follows that $H^{-n}_\mathcal{A}(\Sigma^n(y))=y$ is mono. Then the monomorphism  $y$ fits into an exact triangle in $\mathcal{D}$
$$\Sigma^{-1}(C) \stackrel{b}\longrightarrow Y \stackrel{y} \longrightarrow Z^{-n} \longrightarrow C$$
for some $C\in \mathcal{A}$.  We observe that $\Sigma^n(y)\circ f=0$. It follows that $f$ factors through $\Sigma^n(b)$, as required.
\end{proof}

\subsection{Torsion pairs} Let $\mathcal{A}$ be an abelian category. Recall that a \emph{torsion pair} $(\mathcal{T}, \mathcal{F})$ consists of two full subcategories subject to the following conditions:
\begin{enumerate}
  \item ${\rm Hom}_\mathcal{A}(\mathcal{T}, \mathcal{F})=0$, that is, ${\rm Hom}_\mathcal{A}(T,F)=0$ for any $T\in \mathcal{T}$ and $F\in \mathcal{F}$;
  \item For any object $X$ in $\mathcal{A}$, there exists a short exact sequence
  \begin{align}\label{equ:tor}
   0\longrightarrow T\longrightarrow X\longrightarrow F\longrightarrow 0
  \end{align}
  with $T\in \mathcal{T}$ and $F\in \mathcal{F}$.
\end{enumerate}
We observe that the exact sequence in (\ref{equ:tor}) is unique up to isomorphism.

For an exact sequence $\xi\colon 0\rightarrow X\rightarrow E\rightarrow Y\rightarrow 0$ and a morphism $t\colon Y'\rightarrow Y$, we denote by $[\xi].t$ the equivalence class in ${\rm Yext}_\mathcal{A}^1(Y', X)$ obtained by the pullback of $\xi$ along $t$. Similarly, for a morphism $s\colon X\rightarrow X'$ we denote by $s.[\xi]$ the equivalence class in ${\rm Yext}_\mathcal{A}^1(Y, X')$ obtained by the pushout of $\xi$ along $s$.

The following two lemmas will be used in the next section.

\begin{lem}\label{lem:ff}
Let $\mathcal{A}$ and $\mathcal{A}'$ be abelian categories with torsion pairs $(\mathcal{T}, \mathcal{F})$ and $(\mathcal{T}', \mathcal{F}')$, respectively. Assume that $G\colon \mathcal{A}'\rightarrow \mathcal{A}$ is an additive functor satisfying the following conditions:
\begin{enumerate}
\item The functor $G$ is exact  satisfying $G(\mathcal{T}')\subseteq \mathcal{T}$ and $G(\mathcal{F}')\subseteq \mathcal{F}$;
    \item The restrictions $G|_{\mathcal{T}'}$ and $G|_{\mathcal{F}'}$ are fully faithful;
    \item For any objects $F'\in \mathcal{F}'$ and $T'\in \mathcal{T}'$, the functor $G$ induces a surjective map ${\rm Hom}_{\mathcal{A}'}(F', T')\rightarrow {\rm Hom}_{\mathcal{A}}(G(F'), G(T'))$ and an injective map ${\rm Yext}^1_{\mathcal{A}'}(F', T')\rightarrow {\rm Yext}^1_\mathcal{A}(G(F'), G(T'))$.
\end{enumerate}
Then $G$ is fully faithful.
\end{lem}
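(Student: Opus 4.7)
The plan is to show that $G$ induces a bijection $\text{Hom}_{\mathcal{A}'}(X',Y')\to\text{Hom}_{\mathcal{A}}(GX',GY')$ for all $X',Y'\in\mathcal{A}'$ by a sequence of five-lemma arguments built on the canonical torsion decompositions $0\to T'_X\to X'\to F'_X\to 0$ and $0\to T'_Y\to Y'\to F'_Y\to 0$; by exactness in (1) these map to the torsion decompositions of $GX'$ and $GY'$ in $\mathcal{A}$. As a preliminary I would upgrade the surjectivity in (3) to a full bijection on $\text{Hom}(F',T')$: given $\psi\colon F'\to T'$ with $G(\psi)=0$, the image $I=\mathrm{im}(\psi)$ satisfies $G(I)=0$ by exactness, and torsion-decomposing $I$ as $0\to T'_I\to I\to F'_I\to 0$ then forces $G(T'_I)=G(F'_I)=0$; applying the faithfulness in (2) to $\mathrm{id}_{T'_I}$ and $\mathrm{id}_{F'_I}$ gives $T'_I=F'_I=0$, so $\psi=0$.

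Next I would establish that $G$ induces isomorphisms on both $\text{Hom}(T'_X,Y')$ and $\text{Hom}(F'_X,Y')$. The first is immediate from $\text{Hom}(T'_X,F'_Y)=0$, which reduces $\text{Hom}(T'_X,Y')$ to $\text{Hom}(T'_X,T'_Y)$ and then to its $\mathcal{A}$-analog by (2). For the second, applying $\text{Hom}(F'_X,-)$ to the $Y'$-sequence and comparing with the analogous sequence in $\mathcal{A}$, the columns under $G$ are, respectively, the just-established bijection on $\text{Hom}(F'_X,T'_Y)$, the desired iso, the $(2)$-iso on $\text{Hom}(F'_X,F'_Y)$, and the $(3)$-injection on $\text{Yext}^1(F'_X,T'_Y)$. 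A standard four-term diagram chase then delivers the middle iso.

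The main obstacle is to show that $G\colon\text{Yext}^1_{\mathcal{A}'}(F'_X,Y')\to\text{Yext}^1_{\mathcal{A}}(GF'_X,GY')$ is injective, as this is what the final five-lemma requires. Given $[\eta]\colon 0\to Y'\to E\to F'_X\to 0$ with $G[\eta]=0$, I would push out along $Y'\twoheadrightarrow F'_Y$ to obtain $[\eta'']\in\text{Yext}^1(F'_X,F'_Y)$ whose image under $G$ is split. Since $F'_X,F'_Y\in\mathcal{F}'$ and $\mathcal{F}'$ is extension-closed, the middle term of $\eta''$ lies in $\mathcal{F}'$, so the full-faithfulness of $G|_{\mathcal{F}'}$ in (2) lifts the retraction of $G\eta''$ to a retraction of $\eta''$ itself, forcing $[\eta'']=0$. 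By exactness of the long sequence from $\text{Hom}(F'_X,-)$, one then has $[\eta]=\pi_{*}[\tau]$ for some $\tau\in\text{Yext}^1(F'_X,T'_Y)$, where $\pi\colon T'_Y\hookrightarrow Y'$. Now $G[\tau]$ lies in the kernel of the corresponding $\pi_{G*}$, hence in the image of the connecting map $\partial_G\colon\text{Hom}(GF'_X,GF'_Y)\to\text{Yext}^1(GF'_X,GT'_Y)$; using the $(2)$-iso on $\text{Hom}(F'_X,F'_Y)$ to pull such a preimage back to $\mathcal{A}'$ and the $(3)$-injection on $\text{Yext}^1(F'_X,T'_Y)$, one concludes that $[\tau]$ lies in the image of $\partial$, whence $\pi_{*}[\tau]=0$, i.e., $[\eta]=0$.

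Finally, applying $\text{Hom}(-,Y')$ to the $X'$-decomposition produces a four-term exact sequence whose vertical maps under $G$ are (iso, target, iso, injection), by the preceding three paragraphs; the five-lemma then yields the bijectivity of $G$ on $\text{Hom}_{\mathcal{A}'}(X',Y')$, completing the proof.
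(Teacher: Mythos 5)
Your proposal is correct, but it takes a genuinely different route from the paper. The paper argues directly at the level of morphisms: it first shows $G$ kills no nonzero object (via the torsion decomposition and condition (2)), then proves fullness by taking $g\colon G(X_1)\rightarrow G(X_2)$, lifting the induced maps $s', t'$ on torsion and torsion-free parts through (2), using the ${\rm Yext}^1$-injectivity in (3) to conclude $s.[\xi_1]=[\xi_2].t$ and hence obtain a morphism $f\colon X_1\rightarrow X_2$ compatible with the decompositions, and finally correcting the defect $g-G(f)$, which factors through ${\rm Hom}_\mathcal{A}(G(F'_1),G(T'_2))$ and is lifted by the Hom-surjectivity in (3); faithfulness then follows from exactness together with faithfulness on objects. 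You instead compare the long exact sequences obtained from ${\rm Hom}(F'_X,-)$, ${\rm Hom}(T'_X,-)$ and ${\rm Hom}(-,Y')$ and run four-lemma chases, which forces you to establish two intermediate facts not needed in the paper: the upgrade of the Hom-surjectivity in (3) to a bijection on ${\rm Hom}(F',T')$ (via exactness and faithfulness on objects), and, more substantially, the injectivity of $G$ on ${\rm Yext}^1_{\mathcal{A}'}(F'_X,Y')$ for an \emph{arbitrary} $Y'$, which you prove by pushing out to the torsion-free part, invoking extension-closure of $\mathcal{F}'$ and full-faithfulness of $G|_{\mathcal{F}'}$ to split, and then descending to ${\rm Yext}^1(F'_X,T'_Y)$ where (3) applies. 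All the squares you need commute because $G$ is exact, hence compatible with pullbacks, pushouts and the low-degree connecting maps for Yoneda extensions, so the argument is complete. The trade-off: the paper's proof is shorter and stays with explicit short exact sequence diagrams, while yours is more homological machinery but yields slightly stronger intermediate statements (bijectivity on ${\rm Hom}(F',T')$ and ${\rm Yext}^1$-injectivity against general targets) that could be of independent use.
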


\begin{proof}
Let $X$ be an object in $\mathcal{A}$ such that $G(X)\simeq 0$. We apply $G$ to the exact sequence $0\rightarrow T'\rightarrow X\rightarrow F'\rightarrow 0$ with $T'\in \mathcal{T}'$ and $F'\in \mathcal{F}'$. By condition (1), $G(T')\in\mathcal{T}$ and $G(F')\in\mathcal{F}$, so we have  $G(T')\simeq 0$ and $G(F')\simeq 0$. By condition (2), we infer that $T'\simeq 0$ and $F'\simeq 0$. Hence, $X\simeq 0$. This proves that $G$ is faithful on objects. It suffices to prove that $G$ is full, since an exact functor which is full and  faithful on objects is necessarily faithful.

For the fullness of $G$, we take a morphism $g\colon G(X_1)\rightarrow G(X_2)$ in $\mathcal{A}$. Consider the exact sequence $\xi_i\colon 0\rightarrow T'_i\xrightarrow{a_i} X_i \xrightarrow{b_i} F'_i\rightarrow 0$ with $T'_i\in \mathcal{T}'$ and $F'_i\in \mathcal{F}'$ for $i=1, 2$. We have the following commutative diagram by ${\rm Hom}_\mathcal{A}(G(T'_1), G(F'_2))=0$.
\[\xymatrix{
0\ar[r] & G(T'_1) \ar@{.>}[d]_-{s'} \ar[r]^-{G(a_1)} & G(X_1) \ar[d]^-{g} \ar[r]^-{G(b_1)} & G(F'_1) \ar@{.>}[d]^-{t'} \ar[r] & 0\\
0\ar[r] & G(T'_2) \ar[r]^-{G(a_2)} & G(X_2) \ar[r]^-{G(b_2)} & G(F'_2) \ar[r] & 0
}\]
By condition (2), there exist $s\colon T'_1\rightarrow T'_2$ and $t\colon F'_1\rightarrow F'_2$ satisfying $G(s)=s'$ and $G(t)=t'$. The above commutative diagram implies that $G(s).[G(\xi_1)]=[G(\xi_2)].G(t)$, or equivalently, $G(s.[\xi_1])=G([\xi_2].t)$. By the injective map in condition (3), we infer that $s.[\xi_1]=[\xi_2].t$. This implies the existence of the following commutative diagram.
\[\xymatrix{
0\ar[r] & T'_1 \ar[d]_-{s} \ar[r]^-{a_1} & X_1 \ar@{.>}[d]^-{f} \ar[r]^-{b_1} & F'_1 \ar[d]^-{t} \ar[r] & 0\\
0\ar[r] & T'_2 \ar[r]^-{a_2} & X_2 \ar[r]^-{b_2} & F'_2 \ar[r] & 0
}\]
Comparing the two diagrams above, we infer that $g-G(f)=G(a_2)\circ h'\circ G(b_1)$ for some morphism $h'\colon G(F'_1)\rightarrow G(T'_2)$. By the surjective map in (3), we may assume that $h'=G(h)$ for some $h:F'_1\to T'_2$ in $\mathcal{A}'$. Then we have $g=G(f+a_2\circ h\circ b_1)$. This completes the proof.
\end{proof}

\begin{lem}\label{lem:equi}
Keep the same assumptions as above. Then $G\colon \mathcal{A}'\rightarrow \mathcal{A}$ is an equivalence if and only if both $G|_{\mathcal{T}'}\colon \mathcal{T}'\rightarrow \mathcal{T}$ and $G|_{\mathcal{F}'}\colon \mathcal{F}'\rightarrow \mathcal{F}$ are equivalences, and $G$ induces an isomorphism ${\rm Yext}^1_{\mathcal{A}'}(F', T')\rightarrow {\rm Yext}^1_\mathcal{A}(G(F'), G(T'))$ for any $T'\in \mathcal{T}'$ and $F'\in \mathcal{F}'$.
\end{lem}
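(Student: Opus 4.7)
The plan is to verify the two implications separately, drawing both times on the torsion decomposition of an arbitrary object. For the backward direction I would reduce fully-faithfulness to Lemma~\ref{lem:ff} and then use the Yext-isomorphism to furnish density; for the forward direction every condition follows routinely from the fact that an equivalence of abelian categories preserves torsion decompositions and equivalence classes of extensions.

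For the ``if'' part, the hypotheses (1)--(3) of Lemma~\ref{lem:ff} are all in force under the present assumptions: (2) because equivalences are fully faithful, (3) because a Yext-isomorphism is in particular injective, while Hom-surjectivity is part of the standing assumption. Hence $G$ is fully faithful. To obtain density, take $X\in \mathcal{A}$ with its canonical torsion sequence $\xi\colon 0\to T\to X\to F\to 0$. Using the assumed equivalences $G|_{\mathcal{T}'}$ and $G|_{\mathcal{F}'}$, pick $T'\in \mathcal{T}'$ and $F'\in \mathcal{F}'$ with $G(T')\simeq T$ and $G(F')\simeq F$. Then $[\xi]\in {\rm Yext}^1_\mathcal{A}(G(F'),G(T'))$ is the image, under the assumed Yext-isomorphism, of a class $[\xi']\in {\rm Yext}^1_{\mathcal{A}'}(F',T')$ represented by $\xi'\colon 0\to T'\to X'\to F'\to 0$; a Yoneda equivalence of $G(\xi')$ with $\xi$ then yields $G(X')\simeq X$, establishing density and hence the equivalence.

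For the ``only if'' direction, the restrictions $G|_{\mathcal{T}'}$ and $G|_{\mathcal{F}'}$ inherit fully-faithfulness from $G$. For density of $G|_{\mathcal{T}'}$ onto $\mathcal{T}$, take $T\in \mathcal{T}$, write $T\simeq G(X')$ by density of $G$, and decompose $X'$ as $0\to T'_0\to X'\to F'_0\to 0$ in $\mathcal{A}'$; applying $G$ exhibits $G(F'_0)$ as a quotient of $T\in\mathcal{T}$, so it lies in $\mathcal{T}\cap \mathcal{F}=0$, forcing $F'_0\simeq 0$ and hence $T\simeq G(T'_0)$. The analogous argument, using that $\mathcal{F}$ is closed under subobjects in a torsion pair, handles $G|_{\mathcal{F}'}$. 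The Yext-isomorphism then follows from the general fact that an equivalence of abelian categories induces bijections on equivalence classes of short exact sequences: surjectivity is obtained by lifting the middle term of a given extension via density and both structural maps via fully-faithfulness, then invoking that an equivalence reflects exactness; injectivity is immediate by lifting a Yoneda equivalence of $G(\xi'_1)$ and $G(\xi'_2)$ back to $\mathcal{A}'$.

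The only genuinely nontrivial step is the density argument in the backward direction, which must extract an actual middle term $X'\in \mathcal{A}'$ from an equality of Yext-classes rather than a mere abstract identification; this is however immediate once one unwinds the definition of the canonical map between the Yext groups.
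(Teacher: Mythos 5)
Your proposal is correct and follows essentially the same route as the paper: for the ``if'' direction, fully-faithfulness comes from Lemma~\ref{lem:ff} and denseness is obtained by lifting the canonical torsion sequence of an object of $\mathcal{A}$ through the restricted equivalences and the ${\rm Yext}^1$-isomorphism; for the ``only if'' direction, denseness of the restrictions is extracted from the torsion decomposition in $\mathcal{A}'$ together with ${\rm Hom}_\mathcal{A}(\mathcal{T},\mathcal{F})=0$, exactly as in the paper. The remaining points you fill in (denseness of $G|_{\mathcal{F}'}$ and the ${\rm Yext}^1$-isomorphism for an exact equivalence) are the routine parts the paper leaves implicit.
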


\begin{proof}
For the ``only if" part, we only prove the denseness of $G|_{\mathcal{T}'}$. For any $T\in\mathcal{T}$, since $G$ is an equivalence, there exists $X\in\mathcal{A}'$ such that
$T\in \mathcal{T}$.
Consider the exact sequence $0\rightarrow T'\rightarrow X\rightarrow F'\rightarrow 0$ with $T'\in \mathcal{T}'$ and $F'\in \mathcal{F}'$. Applying $G$ to it, we obtain an epimorphism  $T\rightarrow G(F')$. But, $G(F')$ lies in $\mathcal{F}$ and thus ${\rm Hom}_\mathcal{A}(T, G(F'))=0$. This implies that $G(F')\simeq 0$ and $F'\simeq 0$. Hence, $X\simeq T'$, belonging to $\mathcal{T}'$.

For the ``if" part, it suffices to show that $G$ is dense. Take an object $X\in\mathcal{A}$ and consider the exact sequence (\ref{equ:tor}). We assume that $T=G(T')$ and $F=G(F')$ for $T'\in \mathcal{T}'$ and $F'\in \mathcal{F}'$. By the above isomorphism between the Yoneda extension groups, we obtain an extension of $F'$ by $T'$,  which is mapped by $G$ to (\ref{equ:tor}). In particular, the object $X$ lies in the essential image of $G$.
\end{proof}

\section{HRS-tilting}

We will divide the proof of Theorem A into three propositions.  Throughout this section, $\mathcal{A}$ is an abelian category with a torsion pair $(\mathcal{T}, \mathcal{F})$.  We denote by $\Sigma$ the translation functor on $\mathbf{D}^b(\mathcal{A})$.

By \cite[I.2]{HRS}, there is a unique bounded $t$-structure on $\mathbf{D}^b(\mathcal{A})$ with heart
$$\mathcal{B}=\{X\in \mathbf{D}^b(\mathcal{A})\; |\; H^{-1}(X)\in \mathcal{F}, H^0(X)\in \mathcal{T}, H^i(X)=0 \mbox{ for } i\neq -1, 0\}.$$
The abelian category $\mathcal{B}$ is called the (forward) \emph{HRS-tilt} of $\mathcal{A}$ with respect to the torsion pair $(\mathcal{T},\mathcal{F})$. By truncation, any object in $\mathcal{B}$ is isomorphic to a $2$-term complex $Y$ with $Y^i=0$ for $i\neq 0, -1$. Moreover, we have $\mathcal{B}=\Sigma(\mathcal{F})\ast \mathcal{T}$. It follows that  $(\Sigma(\mathcal{F}),\mathcal{T})$ is a torsion pair in $\mathcal{B}$. With respect to this torsion pair, we consider the backward HRS-tilt of $\mathcal{B}$:
$$\mathcal{A}'=\{Z\in \mathbf{D}^b(\mathcal{B})\; |\; H^0(Z)\in  \mathcal{T}, H^1(Z)\in \Sigma(\mathcal{F}), H^i(Z)=0 \mbox{ for } i\neq 1, 0\}.$$
We denote by $\Sigma_\mathcal{B}$ the translation functor on $\mathbf{D}^b(\mathcal{B})$. Hence, we have $\mathcal{A}'=\mathcal{T}\ast \Sigma_\mathcal{B}^{-1}(\Sigma(\mathcal{F}))$ in $\mathbf{D}^b(\mathcal{B})$. Set $\mathcal{T}'=\mathcal{T}$ and $\mathcal{F}'=\Sigma_\mathcal{B}^{-1}(\Sigma(\mathcal{F}))$. Then $(\mathcal{T}', \mathcal{F}')$ is a torsion pair in $\mathcal{A}'$.

We fix a realization functor $(G, \omega)\colon \mathbf{D}^b(\mathcal{B})\rightarrow \mathbf{D}^b(\mathcal{A})$ with respect to the heart $\mathcal{B}$. In particular, the restrictions $G|_\mathcal{T}$ and $G|_{\Sigma(\mathcal{F})}$ coincide with the inclusions of $\mathcal{T}$ and $\Sigma(\mathcal{F})$ in $\mathbf{D}^b(\mathcal{B})$, respectively. The natural isomorphism  $\omega\colon G\Sigma_\mathcal{B}\rightarrow \Sigma G$ induces the isomorphism
\begin{align}\label{iso:2}
t_F=(\Sigma^{-1}\omega_{\Sigma^{-1}_\mathcal{B}\Sigma(F)})^{-1}\colon G\Sigma_\mathcal{B}^{-1}\Sigma (F) \longrightarrow  \Sigma^{-1}G \Sigma(F)=F
\end{align}
for each $F\in \mathcal{F}$. Therefore, we have $G(\mathcal{F}')=\mathcal{F}$ and $G(\mathcal{T}')=\mathcal{T}$. By $\mathcal{A}'=\mathcal{T}'\ast \mathcal{F}'$, we infer that $G(\mathcal{A}')\subseteq \mathcal{A}$. In other words, the functor $G$ is $t$-exact, where $\mathbf{D}^b(\mathcal{B})$ is endowed with the $t$-structure given by the heart $\mathcal{A}'$ and $\mathbf{D}^b(\mathcal{A})$ has the canonical $t$-structure. Consequently, by Lemma \ref{lem:t-ex}(1), the restriction $G|_{\mathcal{A}'}\colon \mathcal{A}'\rightarrow \mathcal{A}$ is exact.

We have the main observation in this section, whose first assertion is inspired by \cite[Theorem 1.1(d)]{BZ1}. We refer to Subsection~\ref{sec:canonial} for the canonical map $\theta^2$ for the heart $\mathcal{B}$ in $\mathbf{D}^b(\mathcal{A})$.

\begin{prop}\label{prop:ff1}
Keep the notation as above. Then the exact functor $G|_{\mathcal{A}'}\colon \mathcal{A}'\rightarrow \mathcal{A}$ is fully faithful.

Moreover, the following statements are equivalent:   \begin{enumerate}
\item The functor $G|_{\mathcal{A}'}$ is an equivalence;
 \item The canonical map $\theta^2\colon {\rm Yext}^2_\mathcal{B}(\Sigma(F), T)\rightarrow {\rm Hom}_{\mathbf{D}^b(\mathcal{A})}(\Sigma(F), \Sigma^2(T))$ is an isomorphism for any $F\in \mathcal{F}$ and $T\in \mathcal{T}$;
     \item Any morphism $F\rightarrow \Sigma(T)$ in $\mathbf{D}^b(\mathcal{A})$ factors through some object in $\mathcal{B}$  for any $F\in \mathcal{F}$ and $T\in \mathcal{T}$.
\end{enumerate}
\end{prop}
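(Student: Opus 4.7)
The plan is to invoke the two general Lemmas \ref{lem:ff} and \ref{lem:equi}, applied to the exact functor $G|_{\mathcal{A}'}\colon \mathcal{A}' \to \mathcal{A}$ together with the torsion pairs $(\mathcal{T}', \mathcal{F}')$ and $(\mathcal{T}, \mathcal{F})$. Condition (1) of Lemma \ref{lem:ff} has already been verified in the discussion preceding the proposition. Condition (2) is immediate: $G|_{\mathcal{T}'}$ is literally the inclusion $\mathcal{T}\hookrightarrow\mathcal{A}$, and via the natural isomorphisms $t_F$ of \eqref{iso:2} the restriction $G|_{\mathcal{F}'}$ is isomorphic to the inclusion $\mathcal{F}\hookrightarrow\mathcal{A}$; both are fully faithful.

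The substantive content lies in condition (3) of Lemma \ref{lem:ff}, and my key idea is to exploit Proposition \ref{prop:t-ex} in two different ways, corresponding to two choices of heart for $G$. On one hand, $G$ is $t$-exact with respect to the heart $\mathcal{A}'$ in $\mathbf{D}^b(\mathcal{B})$ and the heart $\mathcal{A}$ in $\mathbf{D}^b(\mathcal{A})$, so Proposition \ref{prop:t-ex} for $F'=\Sigma_\mathcal{B}^{-1}\Sigma(F)$ and $T'=T$ gives a commutative square whose left column is $\theta^n_{\mathcal{A}'}$ and right column is $\chi^n$; for $n=1$ both columns are isomorphisms by Lemma \ref{lem:can}(1) and \eqref{iso:chi}. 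On the other hand, $G$ is also $t$-exact with respect to the heart $\mathcal{B}$ on both sides (since $G|_{\mathcal{B}}={\rm Id}_{\mathcal{B}}$), and applying Proposition \ref{prop:t-ex} to $\Sigma(F),T\in\mathcal{B}$ produces a commutative square whose left column is the canonical map for the standard $t$-structure on $\mathbf{D}^b(\mathcal{B})$ (an isomorphism for all $n\geq 1$ by Example~\ref{exm:1}) and right column is the canonical map $\theta^n_\mathcal{B}$ for the HRS heart $\mathcal{B}$ sitting in $\mathbf{D}^b(\mathcal{A})$ (an isomorphism for $n=1$ and injective for $n=2$, by Lemma \ref{lem:can}(1)). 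Splicing the two squares and unwinding the shifts $F'=\Sigma_\mathcal{B}^{-1}\Sigma(F)$ translates the ${\rm Hom}$-surjectivity (resp.\ the ${\rm Yext}^1$-injectivity) of $G$ at $(F',T')$ into the surjectivity of $\theta^1_\mathcal{B}$ (resp.\ the injectivity of $\theta^2_\mathcal{B}$) evaluated at $(\Sigma(F),T)$, both of which hold. This finishes the fully-faithfulness.

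For the three equivalent conditions I invoke Lemma \ref{lem:equi}. Since $G|_{\mathcal{T}'}$ and $G|_{\mathcal{F}'}$ are visibly equivalences, (1) is equivalent to $G$ inducing an isomorphism ${\rm Yext}^1_{\mathcal{A}'}(F',T')\to {\rm Yext}^1_\mathcal{A}(F,T)$ for every $F\in\mathcal{F}$ and $T\in\mathcal{T}$. Running the same two-square argument with the one remaining ingredient being surjectivity in the second square at $n=2$, this is equivalent to the surjectivity, hence (given the generic injectivity) the bijectivity, of $\theta^2_\mathcal{B}\colon {\rm Yext}^2_\mathcal{B}(\Sigma(F),T)\to {\rm Hom}_{\mathbf{D}^b(\mathcal{A})}(\Sigma(F),\Sigma^2(T))$, i.e.\ to (2). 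Finally, (2) $\Leftrightarrow$ (3) is immediate from Lemma \ref{lem:can}(3), which characterises the image of $\theta^2_\mathcal{B}$ as those morphisms $\Sigma(F)\to \Sigma^2(T)$ that factor as $\Sigma(F)\to \Sigma(X)\to\Sigma^2(T)$ for some $X\in\mathcal{B}$; after applying $\Sigma^{-1}$ this is precisely condition (3).

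I expect the main difficulty to be purely notational, namely the careful bookkeeping of the two translations $\Sigma$ on $\mathbf{D}^b(\mathcal{A})$ and $\Sigma_\mathcal{B}$ on $\mathbf{D}^b(\mathcal{B})$, the natural isomorphism $\omega$ together with its iterates $\omega^n$ appearing in Proposition \ref{prop:t-ex}, and the three distinct canonical maps associated to the hearts $\mathcal{A}$, $\mathcal{B}$, $\mathcal{A}'$ inside their respective ambient triangulated categories. Conceptually the proof is short once both commutative squares are drawn, but writing down the identifications of Hom-groups without ambiguity will require some care.
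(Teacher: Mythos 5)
Your proposal is correct and follows essentially the same route as the paper: fully-faithfulness via Lemma \ref{lem:ff} with the two comparison diagrams coming from Proposition \ref{prop:t-ex} (your second square, with heart $\mathcal{B}$ on both sides, is exactly the content of Corollary \ref{cor:2}, which the paper cites directly), then Lemma \ref{lem:equi} identifying condition (1) with the bijectivity of $\theta^2_{\mathcal{B}}$ at $(\Sigma(F),T)$, and Lemma \ref{lem:can}(3) for (2) $\Leftrightarrow$ (3). The only difference is expository bookkeeping of the shifts $\Sigma$, $\Sigma_{\mathcal{B}}$ and the isomorphisms $t_F$, which the paper likewise handles by asserting the commutativity of the corresponding diagrams.
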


\begin{proof}
We observe that $G|_{\mathcal{T}'}\colon \mathcal{T}'\rightarrow \mathcal{T}$ is the identity functor. By the isomorphism (\ref{iso:2}), the restriction $G|_{\mathcal{F}'}\colon \mathcal{F}'\rightarrow \mathcal{F}$ is an equivalence.

In what follows, we verify the condition (3) in Lemma \ref{lem:ff}.
For this, we take arbitrary objects $\Sigma_{\mathcal{B}}^{-1}\Sigma(F)\in \mathcal{F}'$ and $T\in \mathcal{T}'=\mathcal{T}$, where $F$ lies in $\mathcal{F}$. The following square is commutative.
\[\xymatrix{
{\rm Hom}_{\mathcal{A}'}(\Sigma_\mathcal{B}^{-1}\Sigma(F), T) \ar[d]_-{\Sigma_\mathcal{B}} \ar[rr]^-{G} && {\rm Hom}_\mathcal{A}(G\Sigma_\mathcal{B}^{-1}\Sigma(F), T)\\
{\rm Ext}_\mathcal{B}^1(\Sigma(F), T) \ar[r]^-{(G, \omega)} & {\rm Hom}_{\mathbf{D}^b(\mathcal{A})}(\Sigma(F), \Sigma(T)) \ar[r]^-{\Sigma^{-1}} &  {\rm Hom}_\mathcal{A}(F, T) \ar[u]^-{{\rm Hom}_\mathcal{A}(t_F, T)}
}\]
Applying Corollary \ref{cor:2}, the leftmost map in the lower row is an isomorphism. It follows that the upper row is an isomorphism.

The following diagram commutes, where $\theta_{\mathcal{A}'}^1$ is the canonical map associated to the heart $\mathcal{A}'$ and the map $\chi^1$ for $\mathcal{A}$ is an isomorphism; see Example \ref{exm:1}.
\[\xymatrix@C=0.2cm{
{\rm Yext}_{\mathcal{A}'}^1 (\Sigma_\mathcal{B}^{-1}\Sigma(F), T) \ar[d]_-{\theta^1_{\mathcal{A}'}} \ar[r]^-{G} & {\rm Yext}_\mathcal{A}^1(G\Sigma_\mathcal{B}^{-1}\Sigma(F), T) & \ar[l]_-{{\rm Yext}_\mathcal{A}^1(t_F, T)} {\rm Yext}_\mathcal{A}^1(F, T)\ar[d]^{\chi^1}\\
{\rm Hom}_{\mathbf{D}^b(\mathcal{B})}(\Sigma_\mathcal{B}^{-1}\Sigma(F), \Sigma_\mathcal{B}(T)) \ar[d]_-{\Sigma_\mathcal{B}}&& {\rm Ext}_\mathcal{A}^1(F, T)\ar[d]^-{\Sigma}\\
{\rm Ext}_\mathcal{B}^2(\Sigma(F), T) \ar[rr]^-{(G, \omega)} && {\rm Hom}_{\mathbf{D}^b(\mathcal{A})} (\Sigma(F), \Sigma^2(T))
}\]
By Corollary \ref{cor:2}, the lower row is injective. It follows that $G$ induces an injective map $${\rm Yext}_{\mathcal{A}'}^1 (\Sigma_\mathcal{B}^{-1}\Sigma(F), T)\longrightarrow {\rm Yext}_{\mathcal{A}}^1 (G\Sigma_\mathcal{B}^{-1}\Sigma(F), T).$$ Then Lemma \ref{lem:ff} applies to $G$.

For ``(1) $\Leftrightarrow$ (2)", we apply Lemma \ref{lem:equi}. We observe by the above diagram that the condition therein is equivalent to the one that the lower row is an isomorphism. By the commutative triangle in Corollary \ref{cor:2} applied to $G$, this is equivalent to the condition that $\theta^2$ is an isomorphism. Thus, we have the equivalence between (1) and (2). By Lemma \ref{lem:can}(1), $\theta^2$ is always injective. Then the equivalence between (2) and (3) follows from Lemma \ref{lem:can}(3).
\end{proof}

 In what follows, we characterize the essential image of the  functor $G|_{\mathcal{A}'}\colon \mathcal{A}'\rightarrow \mathcal{A}$. Here, we recall that  the \emph{essential image} ${\rm Im}\; F$ of a functor $F\colon \mathcal{C}\rightarrow \mathcal{D}$ means the full subcategory of $\mathcal{D}$  consisting of those objects $D$, which are isomorphic to $F(C)$ for some object $C$ in $\mathcal{C}$.

\begin{prop}\label{prop:ff2}
Keep the notation as above. Let $A$ be an object in $\mathcal{A}$. Then the following statements are equivalent:
\begin{enumerate}
\item The object $A$ belongs to ${\rm Im}\; G|_{\mathcal{A}'}$;
\item There is an exact triangle $A\rightarrow B^0\rightarrow B^1\rightarrow \Sigma(A)$ in $\mathbf{D}^b(\mathcal{A})$ with $B^i\in \mathcal{B}$;
    \item There is an exact sequence in $\mathcal{A}$
    $$0\longrightarrow F^0\longrightarrow F^1\longrightarrow A\longrightarrow T^0 \longrightarrow T^1\longrightarrow 0$$
    with $F^i\in \mathcal{F}$ and $T^i\in \mathcal{T}$ such that the corresponding class in ${\rm Yext}_\mathcal{A}^3(T^1, F^0)$ vanishes;
    \item The object $A$ belongs to ${\rm Im}\;  G$.
\end{enumerate}
\end{prop}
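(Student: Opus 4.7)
My plan is to establish the four conditions equivalent via three two-way implications: $(1)\Leftrightarrow(4)$, $(1)\Leftrightarrow(2)$, and $(2)\Leftrightarrow(3)$. The first two will be short and rest on the machinery already developed; the third carries the real content.

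For $(1)\Leftrightarrow(4)$, I observe that $(1)\Rightarrow(4)$ is immediate from ${\rm Im}\,G|_{\mathcal{A}'}\subseteq {\rm Im}\,G$. For $(4)\Rightarrow(1)$, I would use the fact (established in the paragraph preceding Proposition~\ref{prop:ff1}) that $G$ is $t$-exact with source heart $\mathcal{A}'$ and target heart $\mathcal{A}$; then the argument in the proof of Lemma~\ref{lem:t-ex}(3), combined with the faithfulness of $G|_{\mathcal{A}'}$ from Proposition~\ref{prop:ff1}, shows that $G(Z)\in \mathcal{A}$ forces $Z\in \mathcal{A}'$. For $(1)\Leftrightarrow(2)$: given $Z\in \mathcal{A}'$ with $G(Z)\simeq A$, the $\mathcal{B}$-cohomology of $Z$ is supported in degrees $0$ and $1$, so truncation represents $Z$ in $\mathbf{D}^b(\mathcal{B})$ by a two-term complex $B^0\to B^1$ with $B^i\in \mathcal{B}$; the cone of its differential yields a triangle $Z\to B^0\to B^1\to \Sigma_\mathcal{B}(Z)$ which, after applying $G$ (and using $G|_\mathcal{B}={\rm Id}_\mathcal{B}$), becomes the triangle of~(2). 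Conversely, given~(2), the morphism $\phi\colon B^0\to B^1$ lies in ${\rm Hom}_\mathcal{B}(B^0,B^1)$ since $\mathcal{B}\subset \mathbf{D}^b(\mathcal{A})$ is full; viewing $\phi$ as a morphism in $\mathbf{D}^b(\mathcal{B})$ and taking the shifted cone gives some $Z\in \mathbf{D}^b(\mathcal{B})$ fitting in $Z\to B^0\xrightarrow{\phi} B^1\to \Sigma_\mathcal{B}(Z)$. Applying $G$ and using that any two completions of $\phi$ to a triangle have isomorphic cones, I get $G(Z)\simeq A\in \mathcal{A}$, and then $(4)\Rightarrow(1)$ forces $Z\in \mathcal{A}'$.

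For $(2)\Leftrightarrow(3)$, the implication $(2)\Rightarrow(3)$ starts with the cohomology long exact sequence of $A\to B^0\to B^1\to \Sigma A$ in the canonical $t$-structure, which immediately produces the 6-term sequence of~(3) with $F^i=H^{-1}(B^i)\in \mathcal{F}$ and $T^i=H^0(B^i)\in \mathcal{T}$. To see the vanishing of the Yext class I would write $\chi^3(\xi)=\Sigma^2 c_1\circ \Sigma c_2\circ c_3$, where $c_i=\chi^1(\xi_i)$ are the characteristic classes of the three short exact sequences $\xi_1,\xi_2,\xi_3$ obtained by splicing the 6-term sequence, and then apply the octahedral axiom to the morphism of torsion triangles $\Sigma F^i\to B^i\to T^i\xrightarrow{\varepsilon^i}\Sigma^2 F^i$ induced by $\phi\colon B^0\to B^1$; the octahedral analysis identifies $\chi^3(\xi)$ with a composition involving two consecutive arrows of the rotated triangle $B^0\to B^1\to \Sigma A\to \Sigma B^0$, which compose to zero. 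For $(3)\Rightarrow(2)$, the vanishing $\chi^3(\xi)=0$ allows lifting through the exact sequences arising from applying ${\rm Hom}(-,\Sigma^\bullet F^0)$ and ${\rm Hom}(T^1,\Sigma^\bullet-)$ to the triangles of $\xi_1$ and $\xi_3$; this produces classes $\varepsilon^i\in {\rm Ext}^2_\mathcal{A}(T^i,F^i)$, hence objects $B^i\in \mathcal{B}$ via the torsion triangles $\Sigma F^i\to B^i\to T^i\xrightarrow{\varepsilon^i}\Sigma^2 F^i$, and finally a compatible morphism $\phi\colon B^0\to B^1$ whose shifted cone in $\mathbf{D}^b(\mathcal{A})$ is isomorphic to~$A$.

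The main obstacle I expect is on the Yext-vanishing side of $(2)\Leftrightarrow(3)$: the octahedral bookkeeping identifying $\chi^3(\xi)$ with a vanishing composition in $(2)\Rightarrow(3)$ and, more delicately, the inverse construction in $(3)\Rightarrow(2)$ that coherently reassembles $B^0$, $B^1$, and $\phi$ from the vanishing data. The guiding principle is that the Yext-vanishing condition is exactly what is needed to realize the six-term sequence as the cohomology long exact sequence of a triangle in $\mathbf{D}^b(\mathcal{A})$ with vertices in $\mathcal{B}$, but making this principle rigorous requires detailed diagrammatic chasing through the torsion triangles of $B^0, B^1$ and the three spliced short sequences $\xi_1,\xi_2,\xi_3$.
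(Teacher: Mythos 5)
Your blueprint coincides with the paper's own proof: you get the six-term sequence from the cohomology long exact sequence of $A\rightarrow B^0\rightarrow B^1\rightarrow \Sigma(A)$ (a slightly cleaner packaging than the paper's, which extracts it from a $3\times 3$ diagram), you prove $(2)\Rightarrow(4)$ by taking the cone of the same morphism $B^0\rightarrow B^1$ inside $\mathbf{D}^b(\mathcal{B})$ and applying $G$, and you prove $(4)\Rightarrow(1)$ exactly as the paper does, via $t$-exactness (Lemma \ref{lem:t-ex}(2)) together with the faithfulness of $G|_{\mathcal{A}'}$ from Proposition \ref{prop:ff1}. Those parts are correct.

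The gap is that the core of the proposition, the equivalence $(2)\Leftrightarrow(3)$, is only announced, and the one concrete mechanism you name for it is not the one that works. In the paper one first lifts $f\colon B^0\rightarrow B^1$ to a map of torsion triangles $\Sigma(F^i)\rightarrow B^i\rightarrow T^i\xrightarrow{\partial^i}\Sigma^2(F^i)$, identifies $a=H^{-1}(f)$ (mono) and $b=H^{0}(f)$ (epi), and applies the $3\times 3$ lemma to obtain diagram (\ref{equ:33}), whose southeast square is \emph{anti}-commutative: $\Sigma\chi^1(\rho_2)\circ\chi^1(\rho_1)=-\Sigma^2(a')\circ\partial^1$. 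The vanishing then follows from $\chi^1(\rho_3)\circ a'=0$, i.e.\ from two consecutive arrows of the triangle attached to $0\rightarrow F^0\rightarrow F^1\rightarrow X\rightarrow 0$ --- not, as you suggest, from consecutive arrows of the rotated triangle $B^0\rightarrow B^1\rightarrow\Sigma(A)\rightarrow\Sigma(B^0)$; it is not clear how $\chi^3(\xi)\colon T^1\rightarrow\Sigma^3(F^0)$ would factor through that composite at all. For $(3)\Rightarrow(2)$ your plan to produce classes $\varepsilon^0,\varepsilon^1$, the objects $B^0,B^1$, and then ``a compatible $\phi$ whose cone is $A$'' front-loads exactly the delicate step you flag, and it is unnecessary: the vanishing of $\chi^3(\xi)$ lets one factor $-\Sigma\chi^1(\rho_2)\circ\chi^1(\rho_1)$ through $\Sigma^2(a')$ in the triangle $\Sigma^2(F^0)\rightarrow\Sigma^2(F^1)\rightarrow\Sigma^2(X)\rightarrow\Sigma^3(F^0)$, which produces the single map $\partial^1\colon T^1\rightarrow\Sigma^2(F^1)$ and hence the anti-commutative square; one application of the $3\times 3$ lemma then rebuilds the whole of diagram (\ref{equ:33}) at once, and its middle column is the required triangle $A\rightarrow B^0\rightarrow B^1\rightarrow\Sigma(A)$. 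So the strategy matches the paper, but the decisive diagrammatic step is missing, and the sketch as written points at the wrong source of the vanishing and at a harder-than-necessary reconstruction.
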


\begin{proof}

``(1) $\Rightarrow$ (2)"\quad Recall that any object $Z$ in $\mathcal{A}'$ is isomorphic to a $2$-term complex in $\mathbf{D}^b(\mathcal{B})$ supported in degrees zero and one. Hence, we have an exact triangle in $\mathbf{D}^b(\mathcal{B})$
\[Z\longrightarrow B^0\longrightarrow B^1\longrightarrow \Sigma_{\mathcal{B}}(Z)\]
with $B^i\in \mathcal{B}$. Assume that $A\simeq G(Z)$. Recall that $G|_\mathcal{B}$ coincides with the inclusion of $\mathcal{B}$ in $\mathbf{D}^b(\mathcal{A})$. Applying $G$ to the above triangle, we obtain the required one.

``(2) $\Rightarrow$ (3)" \quad Denote the morphism $B^0\rightarrow B^1$  in the given triangle by $f$.  Recall that $\mathcal{B}=\Sigma(\mathcal{F})\ast \mathcal{T}$. Then we have two exact triangles in the following diagram with $F^i\in \mathcal{F}$ and $T^i\in \mathcal{T}$.
\[\xymatrix{
\Sigma(F^0)\ar[r]\ar@{.>}[d]_-{\Sigma(a)} & B^0 \ar[d]^-{f} \ar[r] & T^0 \ar@{.>}[d]^-{b}\ar[r]^-{\partial^0} & \Sigma^2(F^0) \ar@{.>}[d]^-{\Sigma^2(a)}\\
\Sigma(F^1)\ar[r] & B^1 \ar[r] & T^1 \ar[r]^-{\partial^1} & \Sigma^2(F^1)
}\]
Since ${\rm Hom}_{\mathbf{D}^b(\mathcal{A})}(\Sigma(F^0), T^1)=0$, we have the morphisms $a:F^0\to F^1$ and $b:T^0\to T^1$ in $\mathcal{A}$ making the diagram commute.  We might identify $T^i$ with $H^0(B^i)$ and thus $b$ with $H^0(f)$. Similarly, we identify $a$ with $H^{-1}(f)$.

On the other hand, by applying the usual cohomological functor to the given triangle in (2), we infer that $H^{-1}(f)$ is mono and $H^0(f)$ is epic. Hence, we conclude that $a$ is mono and $b$ is epic. We observe that such morphisms $a$ and $b$ are uniquely determined by $f$, as $(\Sigma(F), \mathcal{T})$ is a torsion pair in $\mathcal{B}$.

We apply the $3\times 3$ Lemma to get the following diagram, where the square in the southeast corner is anti-commutative.
\begin{align}\label{equ:33}
\xymatrix{
X \ar[r]^-{c}\ar[d]_-{\chi^1(\rho_3)} & A\ar[d] \ar[r]^-{c'} & Y\ar[d]^-{b'} \ar[r]^-{\chi^1(\rho_2)}  & \Sigma(X) \ar[d] \\
\Sigma(F^0)\ar[r]\ar[d]_-{\Sigma(a)} & B^0 \ar[d]^-{f} \ar[r] & T^0 \ar[d]^-{b}\ar[r]^-{\partial^0} & \Sigma^2(F^0) \ar[d]\\
\Sigma(F^1)\ar[r] \ar[d]_-{\Sigma(a')} & B^1 \ar[r] \ar[d] & T^1 \ar[r]^-{\partial^1} \ar[d]^-{\chi^1(\rho_1)} & \Sigma^2(F^1)\ar[d]^-{\Sigma^2(a')}\\
\Sigma(X) \ar[r]  & \Sigma(A) \ar[r]  & \Sigma(Y) \ar[r]^-{\Sigma\chi^1(\rho_2)}  & \Sigma^2(X)
}
\end{align}
Since $a$ is mono, we infer that $X\in \mathcal{A}$ with an exact sequence
$$\rho_3\colon 0\longrightarrow F^0\stackrel{a} \longrightarrow F^1\stackrel{a'}\longrightarrow X\longrightarrow 0.$$
Similarly, we have that $Y\in \mathcal{A}$ with an exact sequence
$$\rho_1\colon 0\longrightarrow Y \stackrel{b'}\longrightarrow T^0 \stackrel{b} \longrightarrow T^1\longrightarrow 0.$$
Hence, the upper row is indeed given by an exact sequence
$$\rho_2\colon 0\longrightarrow X \stackrel{c}\longrightarrow A \stackrel{c'} \longrightarrow Y\longrightarrow 0.$$
We splice these three exact sequences to obtain the required one, whose class in ${\rm Yext}_\mathcal{A}^3(T^1, F^0)$ is $\chi^3[\rho_3\cup \rho_2 \cup\rho_1]$. Then we are done by the following identity
\begin{align*}
\chi^3(\rho_3\cup \rho_2 \cup\rho_1) &=\Sigma^2\chi^1(\rho_3)\circ \Sigma\chi^1(\rho_2)\circ \chi^1(\rho_1)\\
                                     &= -\Sigma^2\chi^1(\rho_3)\circ \Sigma^2(a')\circ \partial^1=0,
\end{align*}
where the second equality uses the anti-commutative square in the southeast corner of (\ref{equ:33}), and the last one uses the fact $\chi^1(\rho_3)\circ a'=0$ by  the exact triangle in the leftmost column.

``(3) $\Rightarrow$ (2)" \quad We break the long exact sequence in (3) into three short exact sequences $\rho_i$ as above. By the vanishing condition, we have $\Sigma^2\chi^1(\rho_3)\circ \Sigma\chi^1(\rho_2)\circ \chi^1(\rho_1)=0$. Hence, by the following exact triangle
$$\Sigma^2(F^0)\xrightarrow{\Sigma^2(a)}\Sigma^2(F^1) \xrightarrow{\Sigma^2(a')} \Sigma^2(X) \xrightarrow{\Sigma^2\chi^1(\rho_3)} \Sigma^3(F^0),$$
we have a morphism $\partial^1\colon T^1\rightarrow \Sigma^2(F^1)$ satisfying $\Sigma^2(a')\circ \partial^1=-\Sigma\chi^1(\rho_2)\circ \chi^1(\rho_1)$. Hence, we obtain the anti-commutative square in the southeast corner of (\ref{equ:33}). Now, by $3\times 3$ Lemma, we complete the square into (\ref{equ:33}). Then the middle vertical exact triangle is the required one.

``(2) $\Rightarrow$ (4)" \quad Denote the morphism $B^0\rightarrow B^1$ in (2) by $f$. The corresponding $2$-term complex $\cdots \rightarrow 0\rightarrow B^0\stackrel{f}\rightarrow B^1\rightarrow 0\rightarrow \cdots$ is denoted by $Z$. Then we have a canonical exact triangle in $\mathbf{D}^b(\mathcal{B})$
$$Z\longrightarrow B^0\stackrel{f}\longrightarrow B^1 \longrightarrow \Sigma(Z).$$
Applying $G$ to it, we infer that $G(Z)\simeq A$.

``(4) $\Rightarrow$ (1)" \quad Assume that $G(Z)\simeq A$ for $Z\in \mathbf{D}^b(\mathcal{B})$. Recall that $G$ is $t$-exact. It follows by Lemma \ref{lem:t-ex}(2) that $G|_{\mathcal{A}'}(H^n_{\mathcal{A}'}(Z))\simeq H^n(G(Z))=0$ for $n\neq 0$. Here, $H^n_{\mathcal{A}'}$ denotes the cohomological functor corresponding to the heart $\mathcal{A}'$ in $\mathbf{D}^b(\mathcal{B})$. By Proposition \ref{prop:ff1}, $G|_{\mathcal{A}'}$ is fully faithful. We infer that $H^n_{\mathcal{A}'}(Z)=0$ for $n\neq 0$, that is, $Z\in \mathcal{A}'$. Then we are done.
\end{proof}

\begin{prop}\label{prop:ff3}
Keep the notation as above. Then the following statements are equivalent:
\begin{enumerate}
\item The functor $G \colon \mathbf{D}^b(\mathcal{B})\rightarrow \mathbf{D}^b(\mathcal{A})$ is an equivalence;
\item The category $\mathcal{A}$ is contained in ${\rm Im}\; G$;
\item The restricted functor $G|_{\mathcal{A}'}\colon \mathcal{A}'\rightarrow \mathcal{A}$ is an equivalence.
\end{enumerate}
\end{prop}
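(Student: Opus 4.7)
My plan is to establish the cycle $(1) \Rightarrow (2) \Leftrightarrow (3) \Rightarrow (1)$. The implication $(1) \Rightarrow (2)$ is immediate, since an equivalence is in particular dense, so every object of $\mathcal{A}$ certainly lies in ${\rm Im}\, G$.

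For the equivalence $(2) \Leftrightarrow (3)$, I would directly invoke Proposition \ref{prop:ff2}, whose equivalence $(1) \Leftrightarrow (4)$ asserts that an object $A \in \mathcal{A}$ lies in ${\rm Im}\, G$ if and only if it lies in ${\rm Im}\, G|_{\mathcal{A}'}$. Consequently, condition $(2)$ of the present proposition is equivalent to the statement that $G|_{\mathcal{A}'}\colon \mathcal{A}' \to \mathcal{A}$ is dense. Since Proposition \ref{prop:ff1} has already established that $G|_{\mathcal{A}'}$ is fully faithful, this denseness is equivalent to its being an equivalence, which is $(3)$.

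The implication $(2) \Rightarrow (1)$ is where Theorem \ref{thm:2} enters. I would first note that ${\rm Im}\, G \subseteq \mathbf{D}^b(\mathcal{A})$ is closed under $\Sigma$ and under cones (up to isomorphism), so it forms a triangulated subcategory. Since the canonical $t$-structure on $\mathbf{D}^b(\mathcal{A})$ is bounded, one has $\mathbf{D}^b(\mathcal{A}) = \bigcup_{i,j} \Sigma^i(\mathcal{A}) \ast \cdots \ast \Sigma^j(\mathcal{A})$. Under hypothesis $(2)$, ${\rm Im}\, G$ contains $\mathcal{A}$ and therefore all such iterated extensions, so $G$ is dense. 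As $G$ is the realization functor of the bounded $t$-structure on $\mathbf{D}^b(\mathcal{A})$ whose heart is $\mathcal{B}$, Theorem \ref{thm:2} then yields that $G$ is an equivalence.

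The whole argument is essentially bookkeeping once Propositions \ref{prop:ff1} and \ref{prop:ff2} together with Theorem \ref{thm:2} are in hand; the main obstacle of Theorem A has already been absorbed into those results, notably into the delicate fully faithfulness statement of Proposition \ref{prop:ff1}. The only technical point worth double-checking is that Theorem \ref{thm:2} applies to $G$ in its role as the realization functor of the HRS-tilted heart $\mathcal{B}$ inside the ambient category $\mathbf{D}^b(\mathcal{A})$, which is exactly the framework of that theorem.
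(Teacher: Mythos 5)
Your implications (1) $\Rightarrow$ (2) and (2) $\Leftrightarrow$ (3) are fine and coincide with the paper's own argument. The gap is in (2) $\Rightarrow$ (1), at the very first step: the essential image of a triangle functor is closed under shifts, but it is \emph{not} in general closed under cones. Given a morphism $f\colon G(X)\to G(Y)$ in $\mathbf{D}^b(\mathcal{A})$, its cone is of the form $G(\mathrm{cone}(g))$ only when $f$ lifts to a morphism $g$ in $\mathbf{D}^b(\mathcal{B})$, i.e. only when $G$ is full on the relevant Hom groups --- which is essentially what the proposition is trying to establish. Your reasoning also proves too much: for \emph{any} bounded $t$-structure, the image of its realization functor contains the heart, which generates the ambient category under shifts and extensions; if the essential image were automatically a triangulated subcategory, every realization functor would be dense and hence, by Theorem \ref{thm:2}, an equivalence. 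This contradicts Example \ref{exm:vanish} and the discussion in the introduction. So denseness of $G$ cannot be extracted from (2) by this closure argument; obtaining it is exactly the nontrivial content of the implication, not bookkeeping.

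For comparison, the paper's route for (3) $\Rightarrow$ (1) avoids the issue by only ever using composites whose restrictions to hearts are already known to be equivalences: choose a realization functor $F\colon \mathbf{D}^b(\mathcal{A}')\to\mathbf{D}^b(\mathcal{B})$ of the heart $\mathcal{A}'$; then $(GF)|_{\mathcal{A}'}=G|_{\mathcal{A}'}$ is an equivalence by (3), so $GF$ is an equivalence by Corollary \ref{cor:1}; Lemma \ref{lem:real}(2) gives $\mathcal{B}\subseteq \mathrm{Im}\, F$, and applying Propositions \ref{prop:ff1} and \ref{prop:ff2} to the torsion pair $(\mathcal{T}',\mathcal{F}')$ in $\mathcal{A}'$ (with backward tilt $\mathcal{B}'$ and a realization functor $E$ of $\mathcal{B}'$) shows that $F|_{\mathcal{B}'}$ is an equivalence, whence $FE$ is an equivalence, again by Corollary \ref{cor:1}. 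Since $GF$ and $FE$ are equivalences, $F$ and then $G$ are equivalences; Theorem \ref{thm:2} is not invoked at all. To repair your proof you would need an actual argument that hypothesis (2) forces $G$ to be dense (or full), rather than the closure claim, or else follow an argument of the above kind.
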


\begin{proof}
The implication ``(1) $\Rightarrow$ (2)" is trivial. The equivalence between (2) and (3) follows from Proposition~\ref{prop:ff1} and Proposition \ref{prop:ff2}.

It remains to show ``(3) $\Rightarrow$ (1)". For this, we take a realization functor $$F\colon \mathbf{D}^b(\mathcal{A}')\longrightarrow \mathbf{D}^b(\mathcal{B})$$ of the heart $\mathcal{A}'$. Then the restriction of the composition $GF\colon \mathbf{D}^b(\mathcal{A}')\rightarrow \mathbf{D}^b(\mathcal{A})$ to $\mathcal{A}'\to\mathcal{A}$ coincides with $G|_{\mathcal{A}'}$, thus is an equivalence. By Corollary \ref{cor:1}, the composition $GF$ is an equivalence. We observe that $\mathcal{B}\subseteq {\rm Im}\; F$. Indeed, for any object $B\in \mathcal{B}$, there is an object $Z\in \mathbf{D}^b(\mathcal{A}')$ such that $GF(Z)\simeq B$. By Lemma~\ref{lem:real}(2), we infer that $F(Z)\simeq B$.

We consider the backward HRS-tilt of $\mathcal{A}'$ with respect to the torsion pair $(\mathcal{T}', \mathcal{F}')$
$$\mathcal{B}'=\{X\in \mathbf{D}^b(\mathcal{A}')\; |\; H^{0}(X)\in \mathcal{T}', H^1(X)\in \mathcal{F}', H^i(X)=0 \mbox{ for } i\neq 0, 1\},$$
and the corresponding realization functor $E\colon \mathbf{D}^b(\mathcal{B}') \rightarrow \mathbf{D}^b(\mathcal{A}')$. Then the restriction $F|_{\mathcal{B}'}\colon \mathcal{B}'\rightarrow \mathcal{B}$ is fully faithful by Proposition \ref{prop:ff1}. We have observed that $\mathcal{B}\subseteq {\rm Im}\; F$. By Proposition \ref{prop:ff2}, we infer that $F|_{\mathcal{B}'}$ is an equivalence. By the same reasoning as above, we infer that the composition $FE\colon \mathbf{D}^b(\mathcal{B}')\rightarrow \mathbf{D}^b(\mathcal{B})$ is also an equivalence. Since both $EF$ and $GF$ are equivalences, so is $G$.
\end{proof}

Combining these propositions, we obtain the main result.

\begin{thm}\label{thm:1}
Let $\mathcal{A}$ be an abelian category with a torsion pair $(\mathcal{T}, \mathcal{F})$. Denote by $\mathcal{B}$ the corresponding  HRS-tilt and let $G\colon \mathbf{D}^b(\mathcal{B})\rightarrow \mathbf{D}^b(\mathcal{A})$ be a realization functor. Then the following statements are equivalent:
\begin{enumerate}
\item The functor $G \colon \mathbf{D}^b(\mathcal{B})\rightarrow \mathbf{D}^b(\mathcal{A})$ is an equivalence;
\item The category $\mathcal{A}$  lies in ${\rm Im}\; G$;
    \item The canonical maps $\theta^2_{X, Y}\colon {\rm Yext}^2_\mathcal{B}(X, Y)\rightarrow {\rm Hom}_{\mathbf{D}^b(\mathcal{A})}(X, \Sigma^2(Y))$ are isomorphisms for any $X, Y\in \mathcal{B}$;
        \item Each object $A\in \mathcal{A}$ fits into an exact sequence
        $$0\longrightarrow F^0\longrightarrow F^1\longrightarrow A\longrightarrow T^0 \longrightarrow T^1\longrightarrow 0$$
    with $F^i\in \mathcal{F}$ and $T^i\in \mathcal{T}$ such that the corresponding class in ${\rm Yext}_\mathcal{A}^3(T^1, F^0)$ vanishes.
\end{enumerate}
\end{thm}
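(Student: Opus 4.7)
My plan is to deduce Theorem~\ref{thm:1} directly by assembling the three preceding propositions, organised around the intermediate heart $\mathcal{A}'$. Concretely, I will prove (1) $\Leftrightarrow$ (2) from Proposition~\ref{prop:ff3}, then (2) $\Leftrightarrow$ (4) from Proposition~\ref{prop:ff2} applied to every $A \in \mathcal{A}$, and finally (1) $\Leftrightarrow$ (3) by routing through Proposition~\ref{prop:ff1} together with Corollary~\ref{cor:2}.

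The first step is immediate: Proposition~\ref{prop:ff3} already states the equivalence of its items (1) and (2), which are the items (1) and (2) of the theorem. For (2) $\Leftrightarrow$ (4), Proposition~\ref{prop:ff2} shows that for each individual $A \in \mathcal{A}$, the membership $A \in {\rm Im}\; G$ is equivalent to the existence of the prescribed four-term exact sequence with vanishing Yext$^3$-class; quantifying over all $A$ identifies condition (2) of the theorem with condition (4).

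For (1) $\Rightarrow$ (3), I exploit that $G$ is a realization functor for the heart $\mathcal{B}$ inside $\mathbf{D}^b(\mathcal{A})$; if $G$ is an equivalence, Corollary~\ref{cor:2} forces all canonical maps $\theta^n_{X, Y}$ with $X, Y \in \mathcal{B}$ and $n \geq 1$ to be isomorphisms, and (3) is the case $n=2$. For the converse (3) $\Rightarrow$ (1), I specialise the hypothesis to $X = \Sigma(F)$ and $Y = T$ with $F \in \mathcal{F}$ and $T \in \mathcal{T}$ (both of which lie in $\mathcal{B}$ since $\mathcal{B}=\Sigma(\mathcal{F})\ast \mathcal{T}$): this is precisely condition (2) of Proposition~\ref{prop:ff1}, so $G|_{\mathcal{A}'} \colon \mathcal{A}' \to \mathcal{A}$ is an equivalence, and Proposition~\ref{prop:ff3} upgrades this to the conclusion that $G$ itself is an equivalence.

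The main technical work has already been absorbed into Proposition~\ref{prop:ff1} --- in particular, the diagram chase relating the Yoneda groups for $\mathcal{A}'$ with those for $\mathcal{B}$ through the comparison isomorphism $t_F$ --- and into Proposition~\ref{prop:ff2}, whose $3\times 3$-lemma argument is the real core. The only subtlety worth noting is that item (3) of the theorem asks for $\theta^2$ to be an isomorphism on all pairs in $\mathcal{B}$, which is nominally stronger than what I need to invoke Proposition~\ref{prop:ff1}; but this apparent gap closes for free, since once (1) is established, Corollary~\ref{cor:2} restores the full strength of (3).
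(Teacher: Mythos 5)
Your proposal is correct and follows essentially the same route as the paper: (1) $\Leftrightarrow$ (2) via Proposition~\ref{prop:ff3}, (2) $\Leftrightarrow$ (4) via Proposition~\ref{prop:ff2}, (1) $\Rightarrow$ (3) via Corollary~\ref{cor:2}, and closing the cycle by specialising (3) to pairs $(\Sigma(F), T)$ to invoke Proposition~\ref{prop:ff1} (the paper concludes (3) $\Rightarrow$ (2) directly, you pass through Proposition~\ref{prop:ff3} to get (1), which is an immaterial difference). Your closing remark that only the special case of (3) is needed, with the full strength recovered afterwards, matches the logic implicit in the paper's proof.
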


\begin{proof}
The equivalence between (1) and (2) is contained in Proposition \ref{prop:ff3}; moreover,  both statements are equivalent to the denseness of $G|_{\mathcal{A}'}\colon \mathcal{A}'\rightarrow \mathcal{A}$. We have ``(1) $\Rightarrow$ (3)" by Corollary \ref{cor:2}. The implication of ``(3) $\Rightarrow$ (2)" follows from Proposition \ref{prop:ff1}. For ``(4) $\Leftrightarrow$ (2)", we just apply Proposition \ref{prop:ff2}.
\end{proof}

\begin{rem}
(1) In view of Corollary \ref{cor:2}, the following fact seems to be somehow surprising: to verify the equivalence for the realization functor $G$, we only need to check the surjectivity of the second canonical map $\theta^2$.

(2) We observe that in the above proof , if $G$ is an equivalence, then the realization functor $F\colon \mathbf{D}^b(\mathcal{A}')\rightarrow \mathbf{D}^b(\mathcal{B})$ is also an equivalence. In other words, the torsion pair $(\Sigma(\mathcal{F}), \mathcal{T})$ in $\mathcal{B}$ satisfies the conditions in Theorem \ref{thm:1}.
\end{rem}

\section{Examples}

In this section, we will give some examples for Theorem \ref{thm:1}, which are related to TTF-triples and two-term silting subcategories.

The derived equivalences in Example \ref{exm:2} generalize the classical APR-reflection \cite{APR} and HW-reflection \cite{HW}.  In Example \ref{exm:3}, we construct a torsion pair in a module category, which is non-splitting, non-tilting and non-cotilting; moreover, it is not given by any two-term tilting complex. However, it does satisfy the conditions in Theorem \ref{thm:1}. In Proposition \ref{prop:2-term}, we apply Theorem \ref{thm:1} to two-term silting subcategories.

\subsection{TTF-triples and derived equivalences}

Let $\mathcal{A}$ be an abelian category. For a subcategory $\mathcal{U}$ of $\mathcal{A}$, denote by ${\rm Sub}\; \mathcal{U}$ (\emph{resp}. ${\rm Fac}\; \mathcal{U}$) the full subcategory consisting of subobjects (\emph{resp}. factor objects) of objects in $\mathcal{U}$. For two subcategories $\mathcal{U}, \mathcal{V}$ of $\mathcal{A}$, denote by $\mathcal{U}\ast \mathcal{V}$ the full subcategory consisting of those objects $Z$ such that there exists a short exact sequence $0\rightarrow  U\rightarrow Z\rightarrow V\rightarrow 0$ with $U\in\mathcal{U}$ and $V\in\mathcal{V}$.

\begin{cor}\label{cor:3}
Let $(\mathcal{T}, \mathcal{F})$ be a torsion pair in $\mathcal{A}$, and $\mathcal{B}$ be the corresponding HRS-tilt. Assume that either $\mathcal{A}=\mathcal{F}\ast ({\rm Sub}\; \mathcal{T})$ or $\mathcal{A}=({\rm Fac}\; \mathcal{F})\ast \mathcal{T}$ holds. Then any realization functor $G\colon \mathbf{D}^b(\mathcal{B})\rightarrow \mathbf{D}^b(\mathcal{A})$ is an equivalence.
\end{cor}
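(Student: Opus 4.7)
The plan is to invoke Theorem \ref{thm:1}, in particular the equivalence $(1)\Leftrightarrow(4)$: it suffices to show that every $A\in\mathcal{A}$ fits into a $4$-term exact sequence
\[ 0\longrightarrow F^0\longrightarrow F^1\longrightarrow A\longrightarrow T^0\longrightarrow T^1\longrightarrow 0 \]
with $F^i\in\mathcal{F}$, $T^i\in\mathcal{T}$ whose class in $\mathrm{Yext}^3_\mathcal{A}(T^1,F^0)$ vanishes. Under either hypothesis I will produce such a sequence in which one of $F^0$ or $T^1$ is zero, so that the Yoneda group in question is already the zero group and the vanishing is automatic.

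Suppose first that $\mathcal{A}=\mathcal{F}\ast(\mathrm{Sub}\,\mathcal{T})$. Given $A\in\mathcal{A}$, choose a short exact sequence $0\to F\to A\to S\to 0$ with $F\in\mathcal{F}$ and $S$ a subobject of some $T\in\mathcal{T}$. Since $\mathcal{T}$ is closed under quotients, $T':=T/S$ lies in $\mathcal{T}$, and splicing $0\to S\to T\to T'\to 0$ onto the epimorphism $A\twoheadrightarrow S$ produces
\[ 0\longrightarrow F\longrightarrow A\longrightarrow T\longrightarrow T'\longrightarrow 0, \]
so we may take $F^0=0$, $F^1=F$, $T^0=T$, $T^1=T'$. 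Dually, if $\mathcal{A}=(\mathrm{Fac}\,\mathcal{F})\ast\mathcal{T}$, choose $0\to Q\to A\to T\to 0$ with $T\in\mathcal{T}$ and $Q$ a quotient of some $F\in\mathcal{F}$; closure of $\mathcal{F}$ under subobjects gives $F':=\ker(F\twoheadrightarrow Q)\in\mathcal{F}$, and splicing yields
\[ 0\longrightarrow F'\longrightarrow F\longrightarrow A\longrightarrow T\longrightarrow 0, \]
so we may take $F^0=F'$, $F^1=F$, $T^0=T$, $T^1=0$. In either case $\mathrm{Yext}^3_\mathcal{A}(T^1,F^0)=0$, so condition (4) of Theorem \ref{thm:1} holds and $G$ is an equivalence.

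There is no substantive obstacle here: all the difficulty has been absorbed into Theorem \ref{thm:1}, and what remains is an elementary observation about torsion pairs. The only thing worth flagging is that the two hypotheses are genuinely dual — in the first we exploit that $\mathcal{T}$ is quotient-closed to make $T^1$ appear with $F^0=0$, and in the second we exploit that $\mathcal{F}$ is subobject-closed to make $F^0$ appear with $T^1=0$ — so the Yext-vanishing is achieved for free, without ever computing a characteristic class.
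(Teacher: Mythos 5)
Your proof is correct and follows the paper's own argument: in each case you use that $\mathcal{T}$ is closed under factor objects (resp.\ $\mathcal{F}$ under subobjects) to build the four-term sequence of Theorem \ref{thm:1}(4) with $F^0=0$ (resp.\ $T^1=0$), so the Yext-vanishing is automatic. The paper states this in one line; you have merely spelled out the same splicing explicitly.
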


\begin{proof}
The exact sequence in Theorem \ref{thm:1}(4) exists, once we recall that $\mathcal{T}$ is closed under factor objects and that $\mathcal{F}$ is closed under subobjects.
\end{proof}

\begin{exm}
{\rm The above corollary includes the following cases.
\begin{enumerate}
\item[(1)] The torsion pair $(\mathcal{T}, \mathcal{F})$ is tilting (\emph{resp}. cotilting), which means $\mathcal{A}={\rm Sub}\; \mathcal{T}$ (\emph{resp}. $\mathcal{A}={\rm Fac}\; \mathcal{F}$). The existence of derived equivalences in these cases is due to \cite[Theorem I.3.3]{HRS}. For different approaches, we refer to \cite{BV, Noo, Chen}.
    \item[(2)] The torsion pair $(\mathcal{T}, \mathcal{F})$ is splitting, which means that ${\rm Ext}_\mathcal{A}^1(F, T)=0$ for any $F\in \mathcal{F}$ and $T\in \mathcal{T}$. In this case, any object $A$ in $\mathcal{A}$ is isomorphic to $F\oplus T$ for some $F\in \mathcal{F}$ and $T\in \mathcal{T}$. Then we have $\mathcal{A}=\mathcal{F}\ast \mathcal{T}$. The derived equivalence in this case seems to be new. We observe that it implies \cite[Proposition 5.7]{BZ1}.
        \end{enumerate}}
\end{exm}

Following \cite[Chapter IV.8]{S}, a triple $(\mathcal{X}, \mathcal{Y}, \mathcal{Z})$ of full subcategories  in $\mathcal{A}$ is called a \emph{TTF-triple},   provided that both $(\mathcal{X}, \mathcal{Y})$ and $(\mathcal{Y}, \mathcal{Z})$ are torsion pairs.

\begin{prop}\label{cor:4}
Let $(\mathcal{X}, \mathcal{Y}, \mathcal{Z})$ be a TTF-triple in $\mathcal{A}$. Then the following statements hold.
\begin{enumerate}
  \item The realization functor associated to the HRS-tilt of $\mathcal{A}$ with respect to $(\mathcal{X}, \mathcal{Y})$ is an equivalence if and only if $\mathcal{Z}\subseteq {\rm Sub}\; \mathcal{X}$.
  \item The realization functor associated to the HRS-tilt of $\mathcal{A}$ with respect to $(\mathcal{Y}, \mathcal{Z})$ is an equivalence if and only if $\mathcal{X} \subseteq{\rm Fac}\; \mathcal{Z}$.
\end{enumerate}
\end{prop}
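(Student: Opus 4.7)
The plan is to deduce both parts from Theorem \ref{thm:1} and Corollary \ref{cor:3}, exploiting the distinctive feature of a TTF-triple that the middle class $\mathcal{Y}$ is closed under both subobjects (as the torsion-free part of $(\mathcal{X},\mathcal{Y})$) and quotients (as the torsion part of $(\mathcal{Y},\mathcal{Z})$). I treat (1) in detail; part (2) follows by a symmetric argument. For the \emph{if} direction of (1), where the HRS-tilt is taken with respect to $(\mathcal{X},\mathcal{Y})$ so that $\mathcal{T}=\mathcal{X}$ and $\mathcal{F}=\mathcal{Y}$, the hypothesis $\mathcal{Z}\subseteq\mathrm{Sub}\,\mathcal{X}$ combined with the decomposition $0\to Y\to A\to Z\to 0$ of any $A\in\mathcal{A}$ via $(\mathcal{Y},\mathcal{Z})$ yields $\mathcal{A}=\mathcal{Y}\ast(\mathrm{Sub}\,\mathcal{X})=\mathcal{F}\ast(\mathrm{Sub}\,\mathcal{T})$, so Corollary \ref{cor:3} delivers the equivalence.

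For the converse in (1), take $Z\in\mathcal{Z}$. By Theorem \ref{thm:1}(1)$\Rightarrow$(4) applied to $A=Z$ there is an exact sequence
$$0\longrightarrow F^0\longrightarrow F^1\xrightarrow{\alpha} Z\xrightarrow{\beta} T^0\longrightarrow T^1\longrightarrow 0$$
with $F^i\in\mathcal{Y}$ and $T^i\in\mathcal{X}$. Setting $K=\mathrm{im}\,\alpha$, the object $K$ is a quotient of $F^1\in\mathcal{Y}$ and hence lies in $\mathcal{Y}$; but the monomorphism $K\hookrightarrow Z\in\mathcal{Z}$ must vanish because $\mathrm{Hom}_\mathcal{A}(\mathcal{Y},\mathcal{Z})=0$, so $K=0$. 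Thus $\beta$ is monic and embeds $Z$ into $T^0\in\mathcal{X}$, giving $Z\in\mathrm{Sub}\,\mathcal{X}$.

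Part (2) is formally dual. Its \emph{if} direction decomposes each $A\in\mathcal{A}$ via $(\mathcal{X},\mathcal{Y})$ to place $\mathcal{A}$ inside $(\mathrm{Fac}\,\mathcal{Z})\ast\mathcal{Y}$ and applies the second clause of Corollary \ref{cor:3}. For its converse, given $X\in\mathcal{X}$, Theorem \ref{thm:1}(4) with $\mathcal{T}=\mathcal{Y}$ and $\mathcal{F}=\mathcal{Z}$ produces a $5$-term exact sequence in which the image $L=\mathrm{im}(X\to T^0)$ is a quotient of $X\in\mathcal{X}$, so $L\in\mathcal{X}$; the monomorphism $L\hookrightarrow T^0\in\mathcal{Y}$ then vanishes by $\mathrm{Hom}_\mathcal{A}(\mathcal{X},\mathcal{Y})=0$, forcing $L=0$. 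Consequently $F^1$ surjects onto $X$, giving $X\in\mathrm{Fac}\,\mathcal{Z}$. I do not anticipate a serious obstacle: the Serre-like property of $\mathcal{Y}$ is what makes the extreme terms $F^0$ and $T^1$ collapse, and the Yext-vanishing clause of Theorem \ref{thm:1}(4) plays no role in either \emph{only if} direction---only the mere existence of the $5$-term sequence is used.
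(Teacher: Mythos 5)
Your proof is correct and follows essentially the same route as the paper: the ``if'' directions via Corollary \ref{cor:3} using $\mathcal{A}=\mathcal{Y}\ast\mathcal{Z}$ (resp.\ $\mathcal{A}=\mathcal{X}\ast\mathcal{Y}$), and the ``only if'' directions by applying Theorem \ref{thm:1}(4) to an object of $\mathcal{Z}$ (resp.\ $\mathcal{X}$) and killing the map from $F^1$ (resp.\ to $T^0$) by the Hom-vanishing of the torsion pairs. Your small detour through the image $K=\mathrm{im}\,\alpha$ and the quotient-closure of $\mathcal{Y}$ is harmless; the paper just observes directly that $\mathrm{Hom}_\mathcal{A}(\mathcal{Y},\mathcal{Z})=0$ forces $\alpha=0$.
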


\begin{proof}
We only prove (1), since the proof of (2) is similar. Assume that $\mathcal{Z}\subseteq {\rm Sub}\; \mathcal{X}$.
We have  $\mathcal{A}=\mathcal{Y}\ast\mathcal{Z}=\mathcal{Y}\ast({\rm Sub}\; \mathcal{X})$. Then the ``if" part follows from Corollary \ref{cor:3}.

Conversely, assume that $\mathcal{Z}\nsubseteq {\rm Sub}\; \mathcal{X}$. Take an object $A\in \mathcal{Z}$ such that it does not belong to ${\rm Sub}\; \mathcal{X}$. Then $A$ does not admit an exact sequence in Theorem \ref{thm:1}(4) for the torsion pair $(\mathcal{X}, \mathcal{Y})$, since ${\rm Hom}_\mathcal{A}(Y^1, A)=0$ for any $Y^1\in \mathcal{Y}$.
\end{proof}


In what follows, by an algebra $A$ we mean a finite dimensional algebra over a fixed field $k$. Denote by $A\mbox{-mod}$ the category of finite dimensional left $A$-modules.

\begin{exm}\label{exm:2}
{\rm Let $F\colon \mathcal{C}\rightarrow \mathcal{D}$ be a right exact functor between abelian categories. Denote by $\mathcal{A}$ the comma category of $F$. Recall that an object in $\mathcal{A}$ is  a triple $(C, D; \phi)$ with $C\in \mathcal{C}$, $D\in \mathcal{D}$ and $\phi\colon F(C)\rightarrow D$ a morphism in $\mathcal{D}$. The morphism $(f, g)\colon (C, D; \phi)\rightarrow (C', D'; \phi')$ consists of morphisms $f\colon C\rightarrow C'$ and $g\colon D\rightarrow D'$ subject to the condition $\phi'\circ F(f)=g\circ \phi$. We refer to \cite{FGR} for more details on comma categories.

Assume that $F$ is nonzero and  admits a right adjoint $G$. Then any morphism $\phi\colon F(C)\rightarrow D$ in $\mathcal{D}$ corresponds to a morphism $\phi^{\flat} \colon C\rightarrow G(D)$ in $\mathcal{C}$, called its \emph{adjoint}.

\begin{enumerate}
\item We view $\mathcal{C}$ and $\mathcal{D}$ as full subcategories of $\mathcal{A}$, by identifying $C\in\mathcal{C}$ with $(C,0; 0)$, and $D\in\mathcal{D}$ with $(0, D; 0)$, respectively. Denote by $\mathcal{E}$ the full subcategory of $\mathcal{A}$ consisting of those objects $(C, D; \phi)$ with $\phi$ an epimorphism. Denote by $\mathcal{M}$ the full subcategory consisting of those objects $(C, D; \phi)$ with its adjoint $\phi^\flat$ a monomorphism.   Then we have two TTF-triples  $(\mathcal{E}, \mathcal{D}, \mathcal{C})$ and $(\mathcal{D}, \mathcal{C}, \mathcal{M})$ in $\mathcal{A}$.  Denote by $\mathcal{B}_1$ (\emph{resp}. $\mathcal{B}_2$) the HRS-tilt of $\mathcal{A}$ with respect to $(\mathcal{E}, \mathcal{D})$ (\emph{resp}. $(\mathcal{C}, \mathcal{M})$).

We observe  $\mathcal{C}\subseteq \mathcal{E}$ and $\mathcal{D}\subseteq \mathcal{M}$. Then by Proposition \ref{cor:4},  we have the following  derived equivalences
$$\mathbf{D}^b(\mathcal{B}_1) \stackrel{\sim} \longrightarrow \mathbf{D}^b(\mathcal{A}) \stackrel{\sim}\longleftarrow \mathbf{D}^b(\mathcal{B}_2). $$

On the other hand, since $F$ is nonzero, we have $\mathcal{E}\nsubseteq \mathcal{C}={\rm Fac}\; \mathcal{C}$. Hence by Proposition \ref{cor:4}, any realization functor associated to the HRS-tilt of $\mathcal{A}$ with respect to $(\mathcal{D}, \mathcal{C})$ is not an equivalence. Indeed, the HRS-tilt is equivalent to $\mathcal{C}\times \mathcal{D}$, their direct product; compare \cite[Proposition I.2.3]{HRS}.

\item In the following concrete example, the above derived equivalences are well known and important in the representation theory of algebras.

Let $A$ be an algebra and $M_A$ be a nonzero right $A$-module. Denote by $\Gamma$ the one-point extension of $A$ by $M$, which by definition is the upper triangular matrix algebra
$\begin{pmatrix} k & M\\
0 & A\end{pmatrix}$. Denote by
$e=\begin{pmatrix} 1 & 0\\
                                                                                  0 & 0\end{pmatrix}$ the idempotent corresponding to $k$. Observe that the projective left $\Gamma$-module                                                                                  $\Gamma e$ is simple. We denote by $\Gamma_1$ the  corresponding APR-reflection \cite{APR} of $\Gamma$, and by $\Gamma_2$ the corresponding HW-reflection \cite{HW} of $\Gamma$.

Take $\mathcal{C}=A\mbox{-mod}$, $\mathcal{D}=k\mbox{-mod}$ and $F=M\otimes_A-$. Then the comma category $\mathcal{A}$ of $F$ is equivalent to $\Gamma\mbox{-mod}$. Applying \cite[Theorem 5.8]{HKM}, we observe that $\mathcal{B}_i$ is equivalent to $\Gamma_i\mbox{-mod}$ for $i=1,2$. In particular, we have the following derived equivalences
$$\mathbf{D}^b(\Gamma_1\mbox{-mod}) \stackrel{\sim} \longrightarrow \mathbf{D}^b(\Gamma\mbox{-mod}) \stackrel{\sim}\longleftarrow \mathbf{D}^b(\Gamma_2\mbox{-mod}). $$
We mention that the left equivalence is induced by the APR-tilting module, and the right one can be deduced from \cite[Theorem 10]{TW}; also see \cite{Lad}.

\end{enumerate}
}\end{exm}

In what follows, we construct an example for Theorem \ref{thm:1}, which seems to be not applied to any previously known results.  We say that a torsion class $\mathcal{U}$ in an abelian category $\mathcal{A}$ is \emph{finitely generated}, provided that there exists some object $Z\in \mathcal{U}$ which generates $\mathcal{U}$.

\begin{exm}\label{exm:3}
{\rm We keep the notation in Example \ref{exm:2}. In particular, the functor $F\colon \mathcal{C}\rightarrow \mathcal{D}$ is right exact and $\mathcal{A}$ denotes the comma category.

Assume that $(\mathcal{X}, \mathcal{Y})$ is a torsion pair in $\mathcal{C}$ and that $(\mathcal{U}, \mathcal{V})$ is a torsion pair in $\mathcal{D}$ such that $F(\mathcal{X})\subseteq \mathcal{U}$. We denote by $\mathcal{T}$ (\emph{resp}. $\mathcal{F}$) the full subcategory of $\mathcal{A}$ consisting of those objects $(C, D; \phi)$ with $C\in \mathcal{X}$ and $D\in \mathcal{U}$ (\emph{resp}. $C\in \mathcal{Y}$ and $D\in \mathcal{V}$). Then $(\mathcal{T}, \mathcal{F})$ is a torsion pair in $\mathcal{A}$.

    We assume that the following conditions are satisfied:
    \begin{enumerate}
    \item[(i)] The functor $F$ is exact with $F(\mathcal{C})\subseteq \mathcal{U}$;
    \item[(ii)] The torsion pair $(\mathcal{X}, \mathcal{Y})$ is tilting, non-splitting and non-cotilting;
    \item[(iii)] The torsion pair $(\mathcal{U}, \mathcal{V})$ is splitting and non-tilting such that $\mathcal{U}$ is not finitely generated.
    \end{enumerate}
    We claim that the resulted torsion pair $(\mathcal{T}, \mathcal{F})$ in $\mathcal{A}$ is non-splitting, non-tilting and non-cotilting such that $\mathcal{T}$ is not finitely generated;  moreover, it satisfies the condition in Theorem \ref{thm:1}(4).

    For the claim, it suffices to prove the last statement. We observe by (i) and (iii) that any object in $\mathcal{A}$ is isomorphic to $(C, U; \phi)\oplus(0, V; 0)$ with $C\in\mathcal{C}$, $U\in \mathcal{U}$ and $V\in \mathcal{V}$. It suffices to verify the condition for $(C, U; \phi)$. By (ii), we take an exact sequence $0\rightarrow C \rightarrow X^0\rightarrow X^1\rightarrow 0$ with $X^i\in \mathcal{X}$. By a pushout, we have the following commutative exact diagram.
    \[\xymatrix{
    0 \ar[r] & F(C)\ar[d]_-{\phi} \ar[r] & F(X^0) \ar@{.>}[d]\ar[r] & F(X^1) \ar@{=}[d] \ar[r] & 0\\
    0 \ar[r] & U \ar@{.>}[r] & U^0 \ar[r] & F(X^1) \ar[r] & 0
    } \]
    We observe that $U^0$ lies in $\mathcal{U}$. Then this yields the required exact sequence in $\mathcal{A}$.

    Using this claim, one can construct easily an indecomposable algebra $\Gamma$ such that there is a torsion pair $(\mathcal{T}, \mathcal{F})$ in $\Gamma\mbox{-mod}$ which is non-splitting, non-tilting and non-cotilting; moreover, it is not given by any $2$-term tilting complex; see \cite[Proposition 5.7(1)]{HKM}. However, it satisfies the condition in Theorem \ref{thm:1}(4). Consequently, the torsion pair induces a derived equivalence between $\Gamma\mbox{-mod}$ and its HRS-tilt.

    The construction of $\Gamma$ is similar to Example \ref{exm:2}(2). We take $A$ to be the path algebra given by a linear quiver, where such a torsion pair $(\mathcal{X}, \mathcal{Y})$ in $\mathcal{C}=A\mbox{-mod}$ is well known.  Let $B_1$ be a tame hereditary algebra and $B_2$ be an algebra with a simple injective module $S$. Set $B=B_1\times B_2$.  We take $\mathcal{D}=B\mbox{-mod}$, which is  identified  with $B_1\mbox{-mod}\times B_2\mbox{-mod}$. Set $\mathcal{U}=\mathcal{U}_1\times \mathcal{U}_2$,  where $\mathcal{U}_1$ is the additive subcategory  of $B_1\mbox{-mod}$ generated by preinjective $B_1$-modules and $\mathcal{U}_2$ the additive subcategory of $B_1\mbox{-mod}$ generated by $S$. The $B$-$A$-bimodule $M$ is taken such that $_BM$ lies in $\mathcal{U}$ and that $M_A$ is projective, and then $\Gamma$ is defined to be  the corresponding upper triangular matrix algebra. We omit the details.}
\end{exm}

\subsection{Two-term silting subcategories}

Torsion pairs arising from two-term silting complexes and subcategories were studied in different contexts such as abelian categories with arbitrary coproducts \cite{HKM, AMV} and Ext-finite abelian categories \cite{AIR, IJY, BZ2}. In what follows, we unify them into a general framework. This unification seems to be new, although it might be known to experts.

Let $\mathcal{A}$ be an abelian category. We say that a full additive subcategory $\mathcal{P}$ of $\mathbf{D}^b(\mathcal{A})$ is a \emph{two-term silting subcategory}, provided that the following conditions are satisfied:
\begin{enumerate}
\item The subcategory $\mathcal{P}$ is contravariantly finite in $\mathbf{D}^b(\mathcal{A})$;
\item (two-term) ${\rm Hom}_{\mathbf{D}^b(\mathcal{A})}(\mathcal{P}, \Sigma^i(\mathcal{A}))=0$ for $i\notin \{0, 1\}$;
\item (presilting) ${\rm Hom}_{\mathbf{D}^b(\mathcal{A})}(\mathcal{P},\Sigma^i(\mathcal{P}))=0$ for $i>0$;
\item (generating) If ${\rm Hom}_{\mathbf{D}^b(\mathcal{A})}(\mathcal{P}, \Sigma^i(M))=0$ for all $i$, then $M=0$.
\end{enumerate}
The first condition is necessary, because we do not assume that $\mathcal{A}$ has arbitrary coproducts or $\mathcal{A}$ is Ext-finite. In \cite{HKM, AIR, IJY}, the last condition is given in a slightly different manner, and one might consult \cite[Lemma~4.10 and Corollary~4.11]{BZ2} and \cite[Theorem~4.9]{AMV}.

The following results might be obtained in a very similar way as in \cite[Section~4]{BZ2}.

\begin{lem}\label{lem:two-term}
Let $\mathcal{P}\subseteq \mathbf{D}^b(\mathcal{A})$ be a two-term silting subcategory. Then the following statements hold.
\begin{enumerate}
\item We have a torsion pair $(\mathcal{T}(\mathcal{P}),\mathcal{F}(\mathcal{P}))$ in $\mathcal{A}$, where $\mathcal{T}(\mathcal{P})=\{X\in\mathcal{A}\mid {\rm Hom}_{\mathbf{D}^b(\mathcal{A})}(\mathcal{P},\Sigma(X))=0 \}$ and $\mathcal{F}(\mathcal{P})=\{X\in\mathcal{A}\mid {\rm Hom}_{\mathbf{D}^b(\mathcal{A})}(\mathcal{P},X)=0 \}$. The corresponding HRS-tilt $\mathcal{B}$ of $\mathcal{A}$ is given by
\begin{align}\label{eq:b}
\mathcal{B}=\{X\in {\mathbf{D}^b(\mathcal{A})}\mid {\rm Hom}_{\mathbf{D}^b(\mathcal{A})}(\mathcal{P},\Sigma^i(X))=0 \text{ for } i\neq 0\}.
\end{align}
\item For each $P\in \mathcal{P}$, there is an exact triangle
\begin{align*}
\Sigma(T) \stackrel{b}\longrightarrow P\longrightarrow \widetilde{P}\stackrel{a}\longrightarrow \Sigma^2(T)
\end{align*}
with $T\in\mathcal{T}(\mathcal{P})$ and $\widetilde{P}\in \mathcal{B}$; moreover, the objects $\{\widetilde{P}\mid P\in\mathcal{P}\}$ form a class of projective generators  in  $\mathcal{B}$. \hfill $\square$
\end{enumerate}
\end{lem}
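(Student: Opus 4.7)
The plan is to go through the $t$-structure picture: first establish that (\ref{eq:b}) describes the heart of a bounded $t$-structure on $\mathbf{D}^b(\mathcal{A})$, then recognize it as the HRS-tilt of a torsion pair on $\mathcal{A}$ which must coincide with $(\mathcal{T}(\mathcal{P}),\mathcal{F}(\mathcal{P}))$, and finally construct the projective generators of $\mathcal{B}$ for part (2).

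For part (1), I would introduce the pair
\[\mathcal{D}^{\leq 0}_\mathcal{P}=\{X\in \mathbf{D}^b(\mathcal{A})\mid {\rm Hom}_{\mathbf{D}^b(\mathcal{A})}(\mathcal{P},\Sigma^i(X))=0\text{ for all }i>0\}\]
and its symmetric counterpart $\mathcal{D}^{\geq 0}_\mathcal{P}$, and verify that this is a bounded $t$-structure whose heart is exactly (\ref{eq:b}). The Hom-orthogonality is built into the definitions; the truncation triangles are produced by iterated right $\mathcal{P}$-approximations, where (1) supplies each step, (2) pins the cohomology amplitude of the cone of each approximation to a narrow range so that only finitely many iterations are needed, (3) guarantees that the approximations stay presilting, and (4) rules out any residual tail. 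Once the $t$-structure is established, (2) gives the aisle containment $\mathcal{D}^{\leq -1}_{\mathcal{A}}\subseteq\mathcal{D}^{\leq 0}_\mathcal{P}\subseteq\mathcal{D}^{\leq 0}_{\mathcal{A}}$ (verifiable by a truncation of cohomology sheaves), so by the general HRS-correspondence the new $t$-structure is the HRS-tilt of $\mathcal{A}$ with respect to the torsion pair $(\mathcal{B}\cap\mathcal{A},\,\Sigma^{-1}(\mathcal{B})\cap\mathcal{A})$. Condition (2) then identifies these intersections with the $\mathcal{T}(\mathcal{P})$ and $\mathcal{F}(\mathcal{P})$ of the statement, completing part (1) and the description (\ref{eq:b}) simultaneously.

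For part (2), the first observation is that applying ${\rm Hom}(\mathcal{P}',\Sigma(-))$ to the canonical truncation triangle $H^{-1}(P)[1]\to P\to H^0(P)\to H^{-1}(P)[2]$ yields $H^0(P)\in \mathcal{T}(\mathcal{P})$, thanks to (2) (which kills the two outer terms) and (3) (which kills ${\rm Hom}(\mathcal{P}',\Sigma(P))$). Take the torsion decomposition $0\to T\to H^{-1}(P)\to F\to 0$ provided by part (1) and define $\widetilde{P}$ as the cone of the composition $\Sigma(T)\hookrightarrow \Sigma(H^{-1}(P))\to P$, the second map coming from the truncation triangle of $P$. A direct long-exact-sequence computation yields $H^{-1}(\widetilde{P})\cong F\in\mathcal{F}(\mathcal{P})$, $H^0(\widetilde{P})\cong H^0(P)\in\mathcal{T}(\mathcal{P})$ and vanishing cohomology elsewhere, so $\widetilde{P}\in\mathcal{B}$ by the description (\ref{eq:b}). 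Projectivity of $\widetilde{P}$ in $\mathcal{B}$ is equivalent, via the canonical isomorphism $\theta^1$ of Lemma~\ref{lem:can}(1) applied to the heart $\mathcal{B}$, to the vanishing of ${\rm Hom}_{\mathbf{D}^b(\mathcal{A})}(\widetilde{P},\Sigma(B))$ for every $B\in\mathcal{B}$; applying ${\rm Hom}(-,\Sigma(B))$ to the defining triangle, the term ${\rm Hom}(P,\Sigma(B))$ vanishes directly by (\ref{eq:b}), while ${\rm Hom}(\Sigma^2(T),\Sigma(B))\cong {\rm Hom}(T,\Sigma^{-1}(B))$ vanishes after truncating $B[-1]$ and combining torsion-pair orthogonality ($T\in\mathcal{T}(\mathcal{P})$, $H^{-1}(B)\in\mathcal{F}(\mathcal{P})$) with negative-Ext vanishing. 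For the generator property, the same triangle yields ${\rm Hom}_{\mathbf{D}^b(\mathcal{A})}(\widetilde{P},B)\cong{\rm Hom}(P,B)$ (the flanking terms vanish by analogous arguments), so the vanishing of ${\rm Hom}_\mathcal{B}(\widetilde{P},B)$ for all $P\in\mathcal{P}$ forces ${\rm Hom}(\mathcal{P},\Sigma^i(B))=0$ for every $i\in\mathbb{Z}$, and (4) concludes $B=0$.

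The main obstacle is the production of truncation triangles for the $\mathcal{P}$-$t$-structure in part (1): a single right $\mathcal{P}$-approximation $P\to X$ only truncates to one homological level, and one must iterate carefully so that the final cone lies in the correct aisle; ensuring termination in finitely many steps requires coordinating the two-term condition (2) (bounding the cohomology amplitude of each intermediate cone) with the generating condition (4) (closing the argument by killing the final tail). This is the technical content adapted from \cite[Section~4]{BZ2}, and it is precisely here that the weaker hypothesis of contravariant finiteness---in place of existence of arbitrary coproducts or Ext-finiteness---is genuinely used.
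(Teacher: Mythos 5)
The paper itself offers no written proof here beyond the pointer to \cite[Section~4]{BZ2}, so your plan can only be measured against that intended route; your overall strategy (build a $t$-structure from the Hom-vanishing classes, recognize it as intermediate and hence an HRS-tilt, then produce $\widetilde{P}$ from the torsion decomposition of $H^{-1}(P)$) is a reasonable variant of it, and your computations of $H^*(\widetilde{P})$ and of the projectivity of $\widetilde{P}$ are correct as far as they go. But there are concrete gaps. In part (1), the Hom-orthogonality axiom is \emph{not} ``built into the definitions'': for $X$ with ${\rm Hom}(\mathcal{P},\Sigma^{>0}X)=0$ and $Y$ with ${\rm Hom}(\mathcal{P},\Sigma^{\leq 0}Y)=0$ there is no formal reason why ${\rm Hom}(X,Y)=0$; proving it amounts to showing that every object of the putative aisle is built from nonnegative shifts of $\mathcal{P}$, i.e. it is the same work as the truncation triangles, not a freebie. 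Moreover your termination argument is not actually supplied by condition (2): that condition controls ${\rm Hom}(\mathcal{P},\Sigma^i A)$ for stalks $A$, but it bounds neither the cohomological amplitude of the objects of $\mathcal{P}$ nor the groups ${\rm Hom}(\mathcal{P},\Sigma^{i}\mathcal{P})$ for $i<0$, which are allowed to be nonzero (this is precisely the point of Proposition~\ref{prop:2-term}); consequently the successive cones in your iterated-approximation process are not automatically killed by ${\rm Hom}(\mathcal{P},\Sigma^{-i}(-))$ for all large $i$ after finitely many steps, and this is where the real technical content lies (in \cite{BZ2} one instead builds the torsion pair in $\mathcal{A}$ directly by approximations and takes the HRS-tilt, which exists for free, and then identifies its heart with \eqref{eq:b}).

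In part (2) you silently assume that $P$ has cohomology concentrated in degrees $-1,0$: the displayed ``canonical truncation triangle'' $\Sigma H^{-1}(P)\to P\to H^0(P)\to\Sigma^2H^{-1}(P)$ only has this form if $H^i(P)=0$ for $i\neq -1,0$, and your verification that $\widetilde{P}\in\mathcal{B}$ collapses otherwise. In this paper ``two-term'' is the Hom-condition (2), not a cohomological condition, and the concentration is in fact a consequence of the lemma being proved (the triangle in (2) exhibits $P$ as an extension of $\widetilde{P}\in\mathcal{B}$ by $\Sigma T$, both supported in degrees $-1,0$). Since ${\rm Hom}(\mathcal{P},\Sigma^{<0}\mathcal{P})$ may be nonzero, the obvious d\'{e}vissage arguments do not yield $H^{\leq -2}(P)=0$; it is immediate when $\mathcal{A}$ has enough injectives (via ${\rm Hom}(P,\Sigma^nI)\cong{\rm Hom}_\mathcal{A}(H^{-n}(P),I)$), but in the stated generality it needs an argument you have not given. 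Finally, your ``generator'' step only proves that $B\in\mathcal{B}$ with ${\rm Hom}(\widetilde{P},B)=0$ for all $P\in\mathcal{P}$ is zero; what is needed (and used in the proof of Proposition~\ref{prop:2-term}) is that every $B\in\mathcal{B}$ is an epimorphic image of some $\widetilde{P}$. That upgrade requires contravariant finiteness: transport a right $\mathcal{P}$-approximation $P\to B$ across the isomorphism ${\rm Hom}(\widetilde{P},B)\cong{\rm Hom}(P,B)$, and then use the projectivity of the objects $\widetilde{P'}$ together with your vanishing argument to show the cokernel of $\widetilde{P}\to B$ in $\mathcal{B}$ is zero.
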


We mention that one can show in a very similar way as in \cite[Section~4]{IY} that there is an equivalence $\mathcal{B}\stackrel{\sim}\longrightarrow{\rm mod}\mathcal{P}$, sending $B$ to ${\rm Hom}_{\mathbf{D}^b(\mathcal{A})}(-,B)|_{\mathcal{P}}$. Here,  ${\rm mod}\mathcal{P}$ denotes the category of contravariant finitely presented additive functors from $\mathcal{P}$ to the category of abelian groups.

The following result generalizes \cite[Theorem 1.1(e)]{BZ1} and the two-term version of \cite[Corollary 5.2]{PV}.

\begin{prop}\label{prop:2-term}
Let $\mathcal{P}$ be a two-term silting subcategory of $\mathbf{D}^b(\mathcal{A})$,  and $\mathcal{B}$ be the HRS-tilt of $\mathcal{A}$ with respect to  $(\mathcal{T}(\mathcal{P}),\mathcal{F}(\mathcal{P}))$. Then the realization functor $G\colon \mathbf{D}^b(\mathcal{B})\rightarrow \mathbf{D}^b(\mathcal{A})$ is an equivalence if and only if ${\rm Hom}_{\mathbf{D}^b(\mathcal{A})}(\mathcal{P},\Sigma^i(\mathcal{P}))=0$ for each $i<0$.
\end{prop}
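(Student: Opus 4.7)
The plan is to reformulate the hypothesis via the description (\ref{eq:b}) and then apply Theorem~\ref{thm:1} in each direction. The first step, common to both directions, is the observation that combined with the presilting condition in the definition of a two-term silting subcategory, the hypothesis ${\rm Hom}_{\mathbf{D}^b(\mathcal{A})}(\mathcal{P},\Sigma^i(\mathcal{P}))=0$ for $i<0$ yields ${\rm Hom}_{\mathbf{D}^b(\mathcal{A})}(\mathcal{P},\Sigma^i(\mathcal{P}))=0$ for every $i\ne 0$, which by (\ref{eq:b}) is exactly $\mathcal{P}\subseteq\mathcal{B}$. Under this inclusion, each $P\in\mathcal{P}$ is projective in $\mathcal{B}$: Lemma~\ref{lem:can}(1) identifies ${\rm Ext}^1_\mathcal{B}(P,X)$ with ${\rm Hom}_{\mathbf{D}^b(\mathcal{A})}(P,\Sigma X)$, which vanishes for every $X\in\mathcal{B}$ by (\ref{eq:b}).

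For the ``only if'' direction, assume $G$ is an equivalence. By Corollary~\ref{cor:2}, the canonical map $\theta^2_{\widetilde{P},T}$ is an isomorphism for any $T\in\mathcal{T}(\mathcal{P})$. Since $\widetilde{P}$ is projective in $\mathcal{B}$ (Lemma~\ref{lem:two-term}(2)), ${\rm Yext}^2_\mathcal{B}(\widetilde{P},T)=0$, so ${\rm Hom}_{\mathbf{D}^b(\mathcal{A})}(\widetilde{P},\Sigma^2 T)=0$. The connecting morphism of the triangle $\Sigma T\to P\to \widetilde{P}\to \Sigma^2 T$ from Lemma~\ref{lem:two-term}(2) lies in this zero group, hence vanishes, and the triangle splits: $P\simeq \widetilde{P}\oplus \Sigma T$ in $\mathbf{D}^b(\mathcal{A})$. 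Now $T\in\mathcal{T}(\mathcal{P})$ gives ${\rm Hom}_{\mathbf{D}^b(\mathcal{A})}(P,\Sigma T)=0$; decomposing through the splitting, the summand ${\rm Hom}_{\mathbf{D}^b(\mathcal{A})}(\Sigma T,\Sigma T)=\mathrm{End}_\mathcal{A}(T)$ must vanish, forcing $T=0$. Therefore $P\simeq \widetilde{P}\in\mathcal{B}$, giving $\mathcal{P}\subseteq\mathcal{B}$ and thus, by the first paragraph, the required silting condition.

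For the ``if'' direction I plan to verify condition (2) of Theorem~\ref{thm:1}: every $A\in\mathcal{A}$ lies in ${\rm Im}\; G$. By Proposition~\ref{prop:ff2}, this is equivalent to realising $A$ as the cone of a morphism between objects of $\mathcal{B}$; since $\mathcal{P}\subseteq \mathcal{B}$, a natural candidate is a 2-term $\mathcal{P}$-resolution $P^{-1}\to P^0\to A\to \Sigma P^{-1}$ with $P^i\in\mathcal{P}$, for then the complex $(P^{-1}\to P^0)$ viewed in $\mathbf{D}^b(\mathcal{B})$ maps under $G$ to $A$. To construct the resolution, take a $\mathcal{P}$-precover $P^0\to A$ (contravariant finiteness) with triangle $N\to P^0\to A\to \Sigma N$, then a second $\mathcal{P}$-precover $P^{-1}\to N$ with cone $\Sigma N'$, and apply the octahedral axiom to the composition $P^{-1}\to N\to P^0$. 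Long exact sequence analysis of ${\rm Hom}_{\mathbf{D}^b(\mathcal{A})}(\mathcal{P},\Sigma^{\bullet}-)$ on each triangle, using presilting, the silting hypothesis, the two-term condition on $\mathcal{A}$, and the precover surjectivities at both stages, should leave ${\rm Hom}_{\mathbf{D}^b(\mathcal{A})}(\mathcal{P},\Sigma^i N')=0$ for every $i$, whence $N'=0$ by the generating axiom, and the octahedral triangle collapses to the required resolution.

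The main obstacle is the vanishing analysis for $N'$: the silting and two-term conditions kill most degrees automatically, but the residual contributions, inherited through the $\mathcal{F}(\mathcal{P})$-part of $A$, must be absorbed by combining precover surjectivities across both iterations with the generating axiom. Should this direct route prove cumbersome, an alternative is to verify condition (3) of Proposition~\ref{prop:ff1} instead, showing that every morphism $F\to\Sigma T$ with $F\in\mathcal{F}(\mathcal{P})$ and $T\in\mathcal{T}(\mathcal{P})$ factors through some object of $\mathcal{B}$ (for instance, via a $\mathcal{P}$-precover adapted to the extension object of $F$ by $T$); together with Proposition~\ref{prop:ff3}, this yields the desired derived equivalence.
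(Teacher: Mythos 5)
Your first paragraph and your ``only if'' direction are correct and essentially the paper's argument: under the vanishing hypothesis $\mathcal{P}\subseteq\mathcal{B}$ by \eqref{eq:b}, and conversely the vanishing of ${\rm Hom}_{\mathbf{D}^b(\mathcal{A})}(\widetilde{P},\Sigma^2(T))$ (projectivity of $\widetilde{P}$ plus Corollary~\ref{cor:2}) splits the triangle of Lemma~\ref{lem:two-term}(2), and ${\rm Hom}_{\mathbf{D}^b(\mathcal{A})}(P,\Sigma(T))=0$ then forces $T=0$, so $\mathcal{P}\subseteq\mathcal{B}$, which is exactly the negative-degree vanishing.

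The ``if'' direction, however, has a genuine gap: its primary route rests on a false claim. You assert that every $A\in\mathcal{A}$ fits into a triangle $P^{-1}\rightarrow P^0\rightarrow A\rightarrow\Sigma(P^{-1})$ with $P^i\in\mathcal{P}$. Taking cohomology of such a triangle shows that $A$ would have to be a quotient of $H^0(P^0)$, hence in particular ${\rm Hom}_{\mathbf{D}^b(\mathcal{A})}(\mathcal{P},A)\neq 0$ whenever $A\neq 0$. This fails for every nonzero $A\in\mathcal{F}(\mathcal{P})$, e.g.\ take $\mathcal{P}={\rm add}\,T$ for a classical tilting module $T=P_1\oplus S_1$ over the path algebra of the quiver $1\rightarrow 2$ and $A=S_2$: here ${\rm Hom}_A(T,S_2)=0$, yet the realization functor is an equivalence (the tilting case), so no iteration of $\mathcal{P}$-precovers can produce the desired resolution, and your planned vanishing argument for $N'$ cannot succeed. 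Your fallback (verify Proposition~\ref{prop:ff1}(3), or equivalently Theorem~\ref{thm:1}(3)) is the right idea but is only a hope as written; the missing key step, which is what the paper actually does, is to observe that once $\mathcal{P}\subseteq\mathcal{B}$ the connecting map $b$ in Lemma~\ref{lem:two-term}(2) vanishes, so $P\cong\widetilde{P}$ and $\mathcal{P}$ consists of \emph{projective generators of $\mathcal{B}$} (not of $\mathcal{A}$). Hence every $X\in\mathcal{B}$ (for instance $X=\Sigma(F)$) admits a short exact sequence $0\rightarrow Z\rightarrow P\rightarrow X\rightarrow 0$ in $\mathcal{B}$ with $P\in\mathcal{P}$, and since ${\rm Hom}_{\mathbf{D}^b(\mathcal{A})}(P,\Sigma^2(Y))=0$ for $Y\in\mathcal{B}$ by \eqref{eq:b}, every morphism $X\rightarrow\Sigma^2(Y)$ factors through $\Sigma(Z)$ with $Z\in\mathcal{B}$; by Lemma~\ref{lem:can}(3) this gives the surjectivity of $\theta^2_{X,Y}$ required by Theorem~\ref{thm:1}(3). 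In short: resolve objects of $\mathcal{B}$ by $\mathcal{P}$, not objects of $\mathcal{A}$.
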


\begin{proof}

By Theorem~\ref{thm:1}(3), it suffices to show that ${\rm Hom}_{\mathbf{D}^b(\mathcal{A})}(\mathcal{P},\Sigma^i(\mathcal{P}))=0$ for each $i<0$ if and only if the canonical maps $\theta^2_{X, Y}\colon {\rm Yext}^2_\mathcal{B}(X, Y)\rightarrow {\rm Hom}_{\mathbf{D}^b(\mathcal{A})}(X, \Sigma^2(Y))$ are surjective for any $X, Y\in \mathcal{B}$.

Suppose that ${\rm Hom}_{\mathbf{D}^b(\mathcal{A})}(\mathcal{P},\Sigma^i(\mathcal{P}))=0$ for each $i<0$. Then $\mathcal{P}$ is a subcategory of $\mathcal{B}$ by \eqref{eq:b}. Then $b=0$ in the triangle in Lemma~\ref{lem:two-term}(2), which implies $P\cong\widetilde{P}$. It follows that $\mathcal P$ consists of projective generators of $\mathcal B$. So for any object $X\in\mathcal{B}$, there is an exact sequence  $0\to Z\to P\to X\to 0$ in $\mathcal{B}$ with $P\in\mathcal{P}$, which induces an exact triangle $Z\to P\to X\to \Sigma(Z)$ in $\mathbf{D}^b(\mathcal{A})$. By \eqref{eq:b}, we have that any morphism from $X\to \Sigma^2(Y)$ with $Y\in\mathcal{B}$ factors through $\Sigma (Z)$. Hence by Lemma~\ref{lem:can}(3), the map $\theta^2_{X, Y}$ is surjective.

Conversely, we suppose that the canonical map $\theta^2_{X, Y}$ is surjective for any $X,Y\in\mathcal{B}$. It follows that for any $P\in\mathcal{P}$ and any $B\in\mathcal{B}$, we have
$${\rm Hom}_{\mathbf{D}^b(\mathcal{A})}(\widetilde{P},\Sigma^2(B))\cong{\rm Ext}^2_\mathcal{B}(\tilde{P},B)=0.$$
 Hence in the triangle in Lemma \ref{lem:two-term}(2), we have $a=0$. It follows that $\Sigma(T)$ is a direct summand of $P$. We have $\Sigma(T)\simeq 0$ since
 ${\rm Hom}_{\mathbf{D}^b(\mathcal{A})}(P, \Sigma(T))=0$. This proves that $\mathcal{P}\subseteq \mathcal{B}$. It implies by \eqref{eq:b} that ${\rm Hom}_{\mathbf{D}^b(\mathcal{A})}(\mathcal{P},\Sigma^i(\mathcal{P}))=0$ for each $i<0$.
\end{proof}

The following example shows the necessity of the Yext-vanishing condition in Theorem \ref{thm:1}(4).  For a set $\mathcal{S}$ of objects in an additive category, we denote by ${\rm add}\; \mathcal{S}$ the smallest additive subcategory which is closed under direct summands and  contains  $\mathcal{S}$.

\begin{exm}\label{exm:vanish}
{\rm Let $A$ be a Nakayama algebra given by the following  quiver
$$1\xrightarrow{a} 2\xrightarrow{b} 3\xrightarrow{c} 4\xrightarrow{d} 5\xrightarrow{e} 6$$
subject to the relations $cba=0=edc$. Denote by $P_i$ the indecomposable projective $A$-module corresponding to the vertex $i$. We have the following two-term silting complex $P$ supported on degrees $-1$ and $0$
$$P=(0\rightarrow P_1)\oplus (P_2\rightarrow P_1)\oplus (P_3\rightarrow P_1)\oplus (P_6\rightarrow P_4) \oplus (P_6\rightarrow P_5) \oplus (P_6\rightarrow 0),$$
 where the differentials of its indecomposable direct summands are the obvious morphisms. Then $\mathcal{P}={\rm add}\; P$ is a two-term silting subcategory in $\mathbf{D}^b(A\mbox{-mod})$. The corresponding torsion pair $(\mathcal{T}, \mathcal{F})$ in $A\mbox{-mod}$ is given by $\mathcal{T}={\rm add}\; \{1,\begin{smallmatrix}
1\\2
\end{smallmatrix},\begin{smallmatrix}
1\\2\\3
\end{smallmatrix},4, \begin{smallmatrix}
4\\5
\end{smallmatrix},5\}$ and $\mathcal{F}={\rm add}\; \{ 2, \begin{smallmatrix} 2\\ 3\end{smallmatrix}, 3, \begin{smallmatrix} 4\\ 5\\ 6\end{smallmatrix}, \begin{smallmatrix} 5\\ 6\end{smallmatrix}, 6\}$.
We observe ${\rm Hom}_{\mathbf{D}^b(A\mbox{-}{\rm mod})} (P, \Sigma^{-1}(P))\neq 0$.  By Proposition~\ref{prop:2-term}, the corresponding realization functor is not a derived equivalence. However, we have $A\mbox{-mod}=({\rm Fac}\; \mathcal{F})\ast ({\rm Sub}\; \mathcal{T})$. Consequently, for each $A$-module $X$, there is an exact sequence
$$0\longrightarrow F^0\longrightarrow F^1\longrightarrow X\longrightarrow T^0 \longrightarrow T^1\longrightarrow 0$$
    with $F^i\in \mathcal{F}$ and $T^i\in \mathcal{T}$, but the corresponding class in ${\rm Yext}^3_{A\mbox{-}{\rm mod}}(T^1, F^0)\simeq {\rm Ext}_A^3(T^1, F^0)$ might not vanish.}
\end{exm}

\vskip 5pt

\noindent {\bf Acknowledgements}\quad  The work is supported by National Natural Science Foundation of China (No.s 11522113, 11671245, 11671221 and 11626082). The third author was supported by FRINAT grant number 231000 from the Norwegian Research Council and is supported by YMSC. We thank Professor Bernhard Keller and Professor Bin Zhu for their encouragement and valuable comments.

\bibliography{}

\vskip 10pt

{\footnotesize \noindent Xiao-Wu Chen,\\
Key Laboratory of Wu Wen-Tsun Mathematics, Chinese Academy of Sciences,\\
School of Mathematical Sciences, University of Science and Technology of China, Hefei, 230026, P.R. China. \\
URL: http://home.ustc.edu.cn/$^\sim$xwchen\\

\noindent Zhe Han,\\
School of Mathematics and Statistics, Henan University,\\
475004 Kaifeng, P.R. China\\

\noindent Yu Zhou,\\
Yau Mathematical Sciences Center, Tsinghua University,\\
100084 Beijing, P.R. China}

\end{document}